\pgfplotsset{compat=1.15}
\numberwithin{equation}{section}
\newcommand{\nocontentsline}[3]{}
\let\origcontentsline\addcontentsline
\newcommand\stoptoc{\let\addcontentsline\nocontentsline}
\newcommand\resumetoc{\let\addcontentsline\origcontentsline}
\theoremstyle{plain}
\newtheorem{thm}{Theorem}[section]
\newtheorem{rem}[thm]{Remark} 
\newtheorem{prop}[thm]{Proposition} 
\theoremstyle{definition} 
\newtheorem{defi}{Definition}[section] 
\theoremstyle{remark}
\newcommand{\R}{\mathbb{R}}
\newcommand{\x}{x}
\newcommand{\uv}{v}
\newcommand{\f}{\mathbf{f}}
\newcommand{\B}{\mathbf{B}}
\newcommand{\eps}{\varepsilon}
\newcommand{\leqnomode}{\tagsleft@true}
\newcommand{\reqnomode}{\tagsleft@false}
\author{Gabriella Puppo}
\author{Thomas Rey}
\author{Tommaso Tenna}
\address[Gabriella Puppo]{Dipartimento di Matematica, Sapienza Università di Roma, P.le Aldo Moro 5, 00185 Rome, Italy}
\address[Thomas Rey]{Université Côte d’Azur, CNRS, LJAD, Parc Valrose, F-06108 Nice, France}
\address[Tommaso Tenna]{Université Côte d’Azur, CNRS, LJAD, Parc Valrose, F-06108 Nice, France -- Dipartimento di Matematica, Sapienza Università di Roma, P.le Aldo Moro 5, 00185 Rome, Italy}
\title[Kinetic derivation of an isentropic two-phase flow model]{Formal derivation of an isentropic two-phase flow model from the multi-species Boltzmann equation}
\date{}
\begin{document}

\begin{abstract}
    Starting from the multi-species Boltzmann equation for a gas mixture, we propose the formal derivation of the isentropic two-phase flow model introduced in \cite{romenski2004compressible}.  We examine the asymptotic limit as the Knudsen numbers approach zero, in a regime characterized by resonant intra-species collisions, where interactions between particles of the same species dominate. This specific regime leads to a multi-velocity and multi-pressure hydrodynamic model, enabling the explicit computation of the coefficients for the two-phase macroscopic model. Our derivation also accounts for the inclusion of the evolution of the volume fraction, which is a key variable in many macroscopic multiphase models.\\
    \medskip
    
    \textsc{2020 Mathematics Subject Classification:} 82B40, %Kinetic theory of gases
    76T10,  %Liquid-gas two-phase flows, bubbly flows
    76P05. %Rarefied gas flows, Boltzmann equation 
\end{abstract}

\keywords{Hydrodynamic limit; Boltzmann equation; Multi-species mixture; Two-phase flow; Multi-velocity models; Multi-pressure models. }

\maketitle

\vspace{-1cm}
% \tableofcontents

\section{Introduction}
Multiphase flow models are widely diffused for the description of fluids with different physical or thermodynamic properties, usually separated by interfaces which are assumed to be diffuse. In these diffuse interface models, the fluids have different macroscopic properties, but are allowed to mix across the interface zone. One of the first models of this type was introduced by Baer and Nunziato in \cite{baer1986} and it is still the basis for the development of new strategies to describe compressible two-phase flows. After this pioneering work, many other models were proposed to approximate multi-phase fluids, introducing some slight modifications to the original system regarding closure relations and relaxation parameters, see \cite{dumbser2011, flatten2011, lund2012, saurel1999, saurel2001}. Their derivation is mainly based on phenomenological assumptions, which affect the validity of the model in more general situations. An alternative approach to the Baer-Nunziato model is the hyperbolic multi-phase model introduced by Romenski, Drikakis and Toro in \cite{toro2010, romenski2004compressible}, based on the theory of symmetric hyperbolic thermodynamically compatible systems \cite{godunov1995}. 
Despite the fact that this model has been validated from a numerical point of view (see \cite{puppo2023} for its approximation in all Mach number regimes and \cite{theindumbser2022} for a complete analysis on the Riemann problem in the barotropic case), the derivation has an intrinsic phenomenological nature and it contains some coefficients which cannot be explicitly determined. The purpose of the present paper is a formal derivation of macroscopic equations for multiphase flows from Boltzmann equations for mixtures of rarefied gases.

The behavior of rarefied gases is usually modeled through kinetic theory, in order to reproduce non-equilibrium phenomena. The seminal Boltzmann Equation represents the cornerstone to describe the dynamics of a gas which is not in thermodynamical equilibrium, composed by a very large number of identical particles \cite{cercignani1988, cercignanipulvirenti}.

The necessity of studying systems involving gases composed of molecules of different types leads to kinetic models for mixtures. Indeed, realistic kinetic approximations of such situations can be performed only through multi-species models, in which interactions and exchanges between different components must be properly described.
Whereas the mathematical properties of the Boltzmann equation for a single-species gas are well known, the analytical study of multi-species kinetic models is still not complete. Most of the literature concerns theory for Bhatnagar-Gross-Krook (BGK) models for gas mixtures \cite{andries2002, bobylev2018, boscarino2021, haack2017, klingeberg2019}. To the best of our knowledge, the first derivation of the Boltzmann equation for binary gas mixtures is due to Chapman and Cowling \cite{CC_1939}. More recently, Briant and coauthors \cite{briant2016stability, briant2016boltzmann} have investigated the existence, uniqueness, positivity and exponential trend to equilibrium for the linearized version of the full non-linear Boltzmann equation in a perturbative isotropic $L_v^{1}L_x^{\infty}$ setting, namely $L^1$ in velocity space and $L^\infty$ in the physical space. A spectral gap for the linearized operator around global equilibrium has been proven in \cite{briant2016boltzmann}, then extended in \cite{bondesan2020} to the case of the linearized operator around non-equilibrium states and in \cite{bondesan2024}, providing the explicit spectral gap for the linearized operator modeling reactive mixtures.
In another recent paper, Gamba and Pavi{\'c}-{\v{C}}oli{\'c} \cite{gambapavic2020} have given an existence and uniqueness result for the spatially homogeneous multi-species Boltzmann equations with binary interactions in the Banach space represented by the $L^1$ space weighted polynomially in velocity. Properties of such solutions have been later analyzed in \cite{delacanal2020}. Further investigations on the mixing of monatomic and polyatomic gases can be found in \cite{alonsocolicgamba2024}. 

The formal hydrodynamic limit for kinetic mixtures has been investigated for different regimes, both for the BGK model \cite{bisi2021hydro}, for a mixed BGK-Boltzmann model \cite{bisi2024, bisigroppi2025} and for the Boltzmann model \cite{biancadogbe2015, bisi2014, bisi2011multi, bisi2012multi,boudin2017, boudin2015}.
Let us mention also the work of Briant and Grec \cite{briantgrec2023} on the rigorous derivation of the Fick cross-diffusion system from the multi-species Boltzmann equation. 

All hydrodynamic limits of the Boltzmann equation correspond to situations where the frequency of collisions is rapidly increasing. This means that the collision mechanism governed by the collision operator holds on a time scale which is much smaller than the observation time scale and we have an almost instantaneous convergence of the distribution function to thermodynamic equilibrium. We refer to the book of Saint-Raymond for a complete review of hydrodynamic limits of the Boltzmann equation in the single-species framework \cite{saintraymond2009}.
In any case, the fluid limit provides a set of equations for the macroscopic fields, obtained as moments of the kinetic distribution function. This approach leads to a hierarchy of equations coupling lower order moments to those of higher order, which could be truncated to obtain the desired order of approximation \cite{bisi2021hydro}. In a very recent paper \cite{paviccolic2024} the author has derived a multi-velocity and multi-temperature macroscopic model without phase separation obtained within extended thermodynamics from Boltzmann-like equations, for mixtures of monatomic and polyatomic gases. 

The aim of this paper is to propose a formal derivation of the model proposed by Romenski, Drikakis and Toro \cite{romenski2004compressible} using kinetic theory. In particular, after a brief introduction to the multi-species Boltzmann equation, we discuss a hydrodynamic limit for this model in the regime of resonant intra-species collisions \cite{galkin1994}, namely when the interactions between particles of the same species represent the fast process of the dynamics. This regime allows fast convergence towards local ``equilibria'', in which each species reaches its own velocity, whereas inter-species interactions are characterized by a much slower process. The novelty of the work lies in the justification of the different scalings, in the introduction of the notion of ``volume fraction'' starting from the kinetic formulation and in the use of the full Boltzmann collision operator. Moreover, deriving a constitutive law from the microscopic dynamics at the interface of two gases, we are able to recover the original model and to propose explicit expressions for the coefficients appearing in the macroscopic model. 
\section{The Multi-species Boltzmann Equation}
The derivation of the multi-species Boltzmann equation is mostly formal, unlike that of the single-species case. Indeed, it is possible to consider some modeling assumptions, extending the microscopic dynamics of the single-species case to gas mixtures, in which different species are taken into account.

Let us consider a mixture of $M$ species, each described by a distribution function $f_p = f_p(t,x,v)$, where the mass of a molecule of component $p$ is denoted by $m_p$. Given $\x \in \Omega \in \R^{d}$, $\uv \in \R^{d}$, the distribution functions $f_p$ evolve according to the so-called multi-species Boltzmann equation, which, in absence of external forces, takes the following form \cite{briant2016boltzmann}
\begin{equation}
	\label{BEs_system_mixtures}
	\begin{cases}
		\partial_t f_p + \uv \cdot \nabla_\x f_p = \displaystyle \frac{1}{\eps_p} \mathcal{Q}_p(\f), \qquad p=1,\dots,M,\\
		\mathcal{Q}_p (\f) = \displaystyle \sum_{q=1}^M Q^{pq} (f_p,f_q),\\ 
		f_p(0,\x,\uv)= f_{p,0} (\x,\uv),
	\end{cases}
\end{equation}
where $\f=(f_1,\dots, f_M)$. The parameter $\eps_p >0$ is the \textit{Knudsen number}, that is the ratio between the mean free path of particles before each collision and the length scale of observation. Therefore, $\eps_p$ governs the frequency of collisions for the species $p$: taking a small Knudsen number corresponds to more frequent collisions. This particular scaling of the Boltzmann equation is usually known as \textit{hyperbolic scaling}.\\
The time evolution of the distribution function due to particle collisions in the Boltzmann equation is given by a sum of \textit{collision operators} $Q^{pq}$, referred to as self-collision operators (or \textit{intra-species} collision operators) if $p=q$ and cross-collision operators (or \textit{inter-species} collision operators) if $p \neq q$.
The microscopic dynamics is fundamental to understand the formulation of the Boltzmann equation and to derive an explicit formulation for the collision operator in \eqref{BEs_system_mixtures}. We need the following reasonable assumptions:
\begin{enumerate}[label=(\roman*)]
	\item \label{microscopic_hypotheses:one} The particles interact via \textit{binary} collisions. This means that collisions between $3$ or more particles can be neglected.
	\item \label{microscopic_hypotheses:two} These binary collisions are localized in space and time.
	\item \label{microscopic_hypotheses:three} Collisions preserve mass for each species, total momentum and total kinetic energy, namely for two particles of species $p$ and $q$ involved in the collision it holds 
	\begin{equation*}
		\begin{cases}
			m_p v' + m_q v_*' = m_p v + m_q v_*,\\
			\displaystyle m_p |v'|^2 + m_q |v_*'|^2 = m_p |v|^2 + m_q |v_*|^2.
		\end{cases}
	\end{equation*}
\end{enumerate}
We avoid any superscripts $v^p$ in the notations and we simply refer to the pre-collisional velocities as $(v, v_*)$ and to the post-collisional ones as $(v', v_*')$.\\
In particular, the conservation of total momentum and total energy yields the following parametrizations for the post-collisional velocities $(v', v_*')$ in terms of the pre-collisional ones $(v,v_*)$:
\begin{itemize}
	\item The $\omega$-representation
	\begin{equation}
		\label{omega_representation}
		\begin{aligned}
			v'=v - \frac{2\,m_q}{m_p+m_q} ((v-v_*) \cdot \omega)\, \omega,\\
			\displaystyle v_*'=v_* + \frac{2\,m_p}{m_p+m_q} ((v-v_*) \cdot \omega)\, \omega,\\
		\end{aligned} 
	\end{equation}
	where $\omega \in \mathbb{S}^{d-1}$.
	\item The $\sigma$-representation 
	\begin{equation}
		\label{sigma_representation}
		\begin{aligned}
			v'=\frac{m_p v+ m_q v_*}{m_p+m_q} + \frac{m_q}{m_p+m_q} |v-v_*| \sigma,\\
			v_*'=\frac{m_p v+ m_q v_*}{m_p+m_q} - \frac{m_p}{m_p+m_q} |v-v_*| \sigma,\\
		\end{aligned} 
	\end{equation}
	where $\sigma \in \mathbb{S}^{d-1}$.
\end{itemize}
	
	The $\sigma$-representation is related to the $\omega$-one through the following relation
	\begin{equation}
		((v-v_*) \cdot \omega)\, \omega = \frac{1}{2} ( v-v_*- |v-v_*| \sigma).
	\end{equation}
Defining $\mathcal{V}$ as the velocity of the center of mass of the particles $p$ and $q$:
\begin{equation}
	\label{center_mass}
	\mathcal{V}=\frac{m_p}{m_p+m_q}v+\frac{m_q}{m_p+m_q}v_*,
\end{equation}
the laws of elastic collisions gives 
\begin{equation}
	v=\mathcal{V}+\frac{m_q}{m_p+m_q}(v-v_*) \qquad v_*=\mathcal{V}-\frac{m_p}{m_p+m_q}(v-v_*).
\end{equation}
and we can also rewrite the post-collisional velocities as 
\begin{equation}
	v'=\mathcal{V}+\frac{m_q}{m_p+m_q}(v'-v_*) \qquad v'_*=\mathcal{V}-\frac{m_p}{m_p+m_q}(v'-v'_*).
\end{equation}
Hence, the points $v$, $v_*$, $v'$ and $v_*'$ belong to the plane defined by $\mathcal{V}$ and $\text{Span} (\sigma, v-v_*)$ and we have the geometric configuration in Fig.\ref{geometrical_configuration}. The angle $\theta$ depends on $\sigma$ according to the relation 
\begin{equation}
\cos \theta := \frac{v-v_*}{|v-v_*|}\cdot \sigma.    
\end{equation}
\begin{figure}
	\begin{tikzpicture}[line cap=round,line join=round,>=triangle 45,x=1cm,y=1cm, scale=0.7]
		\draw [shift={(-1,0)},line width=1pt] (0,0) -- (180:0.7431493232436724) arc (180:225:0.7431493232436724) -- cycle;
		\draw [line width=1pt] (-1,0) circle (4cm);
		\draw [line width=1pt] (-1,0) circle (6cm);
		\draw [line width=1pt] (-3.8284271247461903,-2.8284271247461903)-- (3.2426406871192857,4.242640687119286);
		\draw [line width=1pt] (-5,0)-- (5,0);
		\draw [line width=1pt] (-1,0)-- (3.2426406871192857,4.242640687119286);
		\draw [line width=1pt] (1.016223158379683,2.226417528739602) -- (1.2264175287396024,2.016223158379684);
		\draw [line width=1pt] (-1,0)-- (5,0);
		\draw [line width=1pt] (2,0.14862986464873434) -- (2,-0.14862986464873434);
		\draw [line width=1pt] (-5,0)-- (-1,0);
		\draw [line width=1pt] (-3.059451945859494,0.14862986464873434) -- (-3.059451945859494,-0.14862986464873434);
		\draw [line width=1pt] (-2.9405480541405065,0.14862986464873434) -- (-2.9405480541405065,-0.14862986464873434);
		\draw [line width=1pt] (-3.8284271247461903,-2.8284271247461903)-- (-1,0);
		\draw [line width=1pt] (-2.5613496216250384,-1.3511552512651215) -- (-2.351155251265119,-1.5613496216250373);
		\draw [line width=1pt] (-2.4772718734810706,-1.2670775031211539) -- (-2.267077503121151,-1.4772718734810697);
		\draw [->,line width=0.6pt,color=blue] (-1,0) -- (-1.9218160309167933,-0.9218160309167933);
		\begin{scriptsize}
			\draw [fill=black] (5,0) circle (2pt);
			\draw[color=black] (5.576578367574754,0.2) node {\Large $v_*$};
			\draw [fill=black] (-5,0) circle (2pt);
			\draw[color=black] (-5.5,0.2) node {\Large $v$};
			\draw [fill=black] (-1,0) circle (2pt);
			\draw[color=black] (-1.109434604075386,0.6508002735590815) node {\Large $\mathcal{V}$};
			\draw [fill=black] (3.2426406871192857,4.242640687119286) circle (2pt);
			\draw[color=black] (3.6119238835089276,4.69148158880605) node {\Large $v_*'$};
			\draw [fill=black] (-3.8284271247461903,-2.8284271247461903) circle (2pt);
			\draw[color=black] (-4.25,-3.2561814525396556) node {\Large $v'$};
			\draw[color=black] (-2.1195733250562734,-0.37125294140357772) node {\Large $\theta$};
			\draw[color=blue] (-1.2658759476720945,-1.0252243458580087) node {\Large $\mathbf{\sigma}$};
		\end{scriptsize}
	\end{tikzpicture}
	\caption{Geometrical configuration of elastic collisions.}
	\label{geometrical_configuration}
\end{figure}
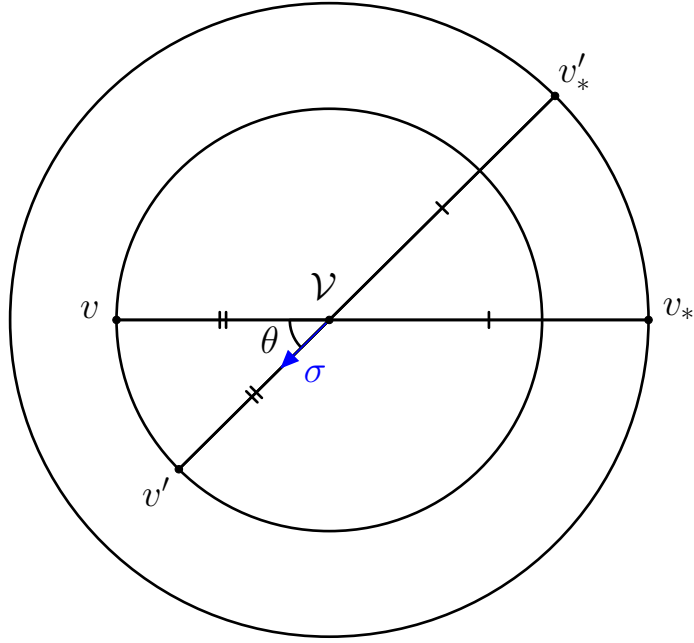

Using the microscopic hypotheses \ref{microscopic_hypotheses:one}-\ref{microscopic_hypotheses:two}-\ref{microscopic_hypotheses:three}, it is possible to derive the explicit expression for the strong form of the collision operators as
\begin{equation}
	\label{strong_form}
	Q^{pq}(f_p,f_q)= \int_{\R^d} \int_{\mathbb{S}^{d-1}} \mathbf{B}_{pq}(|v-v_*|,\theta) \left[f_q(v_*')\,f_p(v') - f_q(v_*)\,f_p(v) \right] d\Omega \, dv_*.
\end{equation}
The collision kernels $\mathbf{B}_{pq}$ are non-negative functions, which contain all the physical information about particles interactions, we refer to 
\cite{cercignani1988, cercignanipulvirenti} for a detailed analysis on this subject.

In the following, we consider a collision kernel $\mathbf{B}_{pq}$ which depends only on $|v-v_*|$ and $\cos \theta$.
For a general spherically symmetrical intermolecular potential $\Phi(r)$, the deflection angle $\theta$ is given by \cite{bird1994, chapmancowling1939}:
\begin{equation}
    \theta(|u|) = \pi - 2\int_0^{W_1} \left[ 1-W^2-\frac{4\Phi(r)}{m\,|u|^2} \right]^{-1/2}\,dW,
\end{equation}
where $W=d_p\cos(\theta/2)/r$, $W_1$ is the positive root of the term in the brackets, $d_p$ is the molecular diameter and $r$ is the distance between two molecules. For the case of $(k-1)$th inverse-power laws, namely for a potential of the form $\Phi(r) \propto 1/(k-1)\,r^{k-1}$, the relevant simplification is that $\mathbf{B}_{pq}(|v-v_*|,\theta)$ becomes the product of a function of $\theta$ alone by a fractional power of $|v-v_*|$. Thus, in the case of inverse-power law potentials, it is possible to rewrite the collision kernel as \cite{bird1994}
\begin{equation}
\label{B_collisionkernel}
	\mathbf{B}_{pq}(|v-v_*|,\theta)= b_{pq}(\theta) |v-v_*|^{\gamma}, \qquad \gamma= \displaystyle \frac{k-(2d-1)}{k-1},
\end{equation}
where $b_{pq}(\theta)$ is a nonelementary function of $\theta$.
In particular, in $d=3$ dimensions of space, we obtain $\gamma=(k-5)/(k-1)$. Thus, the choice $k=5$ corresponds to the particular case in which the collision kernel does not depend on the relative velocity, but only on the deviation angle. Such particles are called Maxwellian molecules and $\mathbf{B}_{pq}(|v-v_*|,\theta)=b_{pq}(\cos \theta)$. The pseudo-Maxwellian case is given by a constant value of $b_\gamma(\cos \theta)$. For numerical purpose a widely diffused model is the variable hard sphere (VHS) model introduced by Bird \cite{bird1994}, corresponding to the choice $b_{pq}(\cos \theta)=C_\gamma$, where $C_\gamma>0$. The case $\gamma=0$ is the aforementioned pseudo-Maxwellian molecules case, whereas $\gamma=1$ gives the hard-spheres molecules case.\\
The hypothesis of constant $\gamma$ for different species could be weakened, for instance by considering a collision kernel \eqref{B_collisionkernel} with different $\gamma_p$ according to the species $p$, as done in \cite{alonsocolicgamba2024}, but it does not affect the main conclusions of this work.

\subsection{Weak Form}
In this section we investigate the weak form of the multi-species Boltzmann equation, following the clear derivation presented in \cite{boudin2015} (see also \cite{cercignani1988} for the single-species case) . The operators $Q^{pp}$ are the classical Boltzmann collision operators. Starting from the definition of velocities obtained in \eqref{sigma_representation} and using the microscopic hypotheses, we derive the \textit{weak form} in the $\sigma$-representation, considering a $C^\infty$ test function $\psi: \R^d \to \R$. We assume that the cross section $\B_{pq}$ satisfies the micro-reversibility assumption $\B_{pq}(v,v_*,\sigma)=\B_{pq}(v_*,v,\sigma)$. We obtain
\begin{equation}
\begin{aligned}
	\label{weak_monospecies}
	\int_{\R^d} Q^{pp} &(f_p,f_p) (v) \psi(v) \, dv \\ = &\frac{1}{2} \int_{\R^d \times \R^d \times \mathbb{S}^{d-1}} f_{p*} f_p (\psi'+\psi'_*-\psi-\psi_*) \B_{pp}(|v-v_*|, \cos \theta) d\sigma \, dv \, dv_*,
\end{aligned}
\end{equation}
where we have introduced the shorthands $f_p'=f_p(v')$, $f_{p*}=f_p(v_*)$, $f'_{p*}=f_p(v'_*)$, $\psi'=\psi(v')$, $\psi_*=\psi(v_*)$ and $\psi'_*=\psi(v_*')$.\\
In an analogous way, it is possible to characterize the bi-species collision operators, under the assumption of a collision kernel independent from the species. Therefore, considering the definition \eqref{sigma_representation} with $p \neq q$, in the same way of the mono-species framework, we obtain the following formulation of $Q^{pq}$ in the $\sigma$-representation
\begin{multline}
	\label{weak_bispecies}
	\int_{\R^d} Q^{pq} (f_p,f_q) (v) \psi(v) \, dv\\ = \int_{\R^d \times \R^d \times \mathbb{S}^{d-1}} f_p f_{q*}(\psi'-\psi) \B_{pq}(|v-v_*|, \cos \theta) d\sigma \, dv \, dv_*.
\end{multline}
In addition, if we consider both the bi-species collision operators, we can give the following characterization:
\begin{multline}
	\label{weak_bispecies_sum}
		\int_{\R^d} Q^{pq} (f_p,f_q) (v) \psi(v) \, dv + \int_{\R^d} Q^{qp} (f_q,f_p) (v) \phi(v) \, dv\\ = \int_{\R^d \times \R^d \times \mathbb{S}^{d-1}} f_p f_{q*} (\psi'+\phi'_*-\psi-\phi_*) \B_{pq}(|v-v_*|, \cos \theta) d\sigma \, dv \, dv_*,
\end{multline}
for any $C^\infty$ test functions $\psi, \, \phi: \R^d \to \R$.

Using the weak form, one can easily observe that the conservation properties of the Boltzmann equation are extended to the multi-species cases. The mass of each species is conserved, since
\begin{equation}
	\int_{R^d} \mathcal{Q}_p (f, g) (\uv) d\uv = 0,
\end{equation}
for any functions $f(\uv)$ and $g(\uv)$, namely the constant function $v \mapsto 1$ is a collisional invariant for this operator.
On the other hand, the functions $\uv$ and $|\uv|^2$ are not collision invariants, but it holds
\begin{equation}
	\int_{R^{d}} \uv Q^{pq} (f, g) (\uv) d\uv = - \int_{R^{d}} \uv Q^{qp} (g, f) (\uv) d\uv,
\end{equation}
and
\begin{equation}
	\int_{R^{d}} |\uv|^2 Q^{pq} (f, g) (\uv) d\uv = - \int_{R^{d}} |\uv|^2 Q^{qp} (g, f) (\uv) d\uv.
\end{equation}
In particular, the choice of $\psi(v)=m_i\, v$ and $\phi(v)=m_j \, v$ in \eqref{weak_bispecies_sum} leads to the conservation of the total momentum, whereas $\psi(v)=m_i\, |v|^2$ and $\phi(v)=m_j \, |v|^2$ leads to the conservation of the total kinetic energy of the mixture.\\

\subsection{Entropy dissipation}
The large time behavior of the multi-species Boltzmann equation \eqref{BEs_system_mixtures} is here briefly described. In this subsection we restrict our analysis to the case of a bi-species mixture for the sake of simplicity, but it can be easily extended to the case of $M$-species mixtures. In particular, we will consider the space homogeneous setting 
\begin{equation}
	\label{BEs_Homo}
	\begin{cases}
		\partial_t f_1 = Q^{11} (f_1, f_1) + Q^{12}(f_1, f_2),\\
		\partial_t f_2 = Q^{21} (f_2, f_1) + Q^{22}(f_2, f_2).
	\end{cases}
\end{equation} 
\begin{defi}[Multi-species Entropy]
	Let $(f_1, f_2)$ be a solution to the homogeneous Boltzmann equation \eqref{BEs_Homo}. The multi-species entropy is described by the so-called Boltzmann's $H$-functional
	\begin{equation}
		H=\int_{\R^d} f_1 \log f_1(v) \, dv + \int_{\R^d} f_2 \log f_2(v) \, dv.
	\end{equation}
\end{defi}
\begin{rem}
This definition can be easily extended to the case of $M \geq 2$ species by defining
\begin{equation}
	H = \sum_{i=1}^M \int_{\R^d} f_p \, \log f_p (v) \, dv.
\end{equation}
\end{rem}
Let us now recall the following result on entropy dissipation, proven in \cite{desvillettes2005}, here explicitly detailed for a non-reactive mixture of two species.
\begin{prop}
\label{prop_entropy}
	Let us assume strictly positive collision kernels $\B_{pq}$ and let us consider a positive solution $(f_1, f_2)$ to \eqref{BEs_system_mixtures}. Then, the entropy is dissipated, namely
	\begin{equation}
		\frac{dH}{dt} \leq 0.
	\end{equation}
	Moreover, we have $\displaystyle \frac{dH}{dt} = 0$ if and only if the equilibrium $(f_1, f_2)$ is given by a pair of global Maxwellian distributions $(\mathcal{M}_1,\mathcal{M}_2)$ with common average velocity $\bar{\uv}$ and temperature $T_{eq}$:
	\begin{equation}
		\label{global_maxwellian}
		\mathcal{M}_p^{n_p,\bar{\uv},T}(t,\x,\uv)=n_p \Big(\frac{m_p}{2 \pi T_{eq}} \Big)^{\frac{d}{2}} \, \exp\Big(-\frac{m_p}{2T_{eq}} |\bar{\uv}-\uv|^2 \Big), \qquad p=1,2.
	\end{equation}
\end{prop}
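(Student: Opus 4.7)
The plan is to differentiate the $H$-functional along the flow of \eqref{BEs_Homo} and to exploit the weak forms \eqref{weak_monospecies} and \eqref{weak_bispecies_sum} with the logarithmic test functions $\psi=\log f_p$, $\phi=\log f_q$. Since each operator $Q^{pq}$ conserves mass, the constant factor in $\partial_t(f_p\log f_p)=(1+\log f_p)\partial_t f_p$ drops out and one obtains
\begin{equation*}
\frac{dH}{dt} \;=\; \sum_{p,q=1}^{2}\int_{\R^d} Q^{pq}(f_p,f_q)\,\log f_p\,d\uv.
\end{equation*}

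I would then split the right-hand side into the intra-species and inter-species contributions. For the diagonal $p=q$ terms, applying \eqref{weak_monospecies} with $\psi=\log f_p$ and performing the classical symmetrization under the pre/post exchange $(v,v_*)\leftrightarrow(v',v_*')$ reduces each contribution to
\begin{equation*}
\int_{\R^d} Q^{pp}(f_p,f_p)\log f_p\,d\uv \;=\; -\frac{1}{4}\int (f_p'f_{p*}'-f_pf_{p*})\,\log\frac{f_p'f_{p*}'}{f_pf_{p*}}\,\B_{pp}\,d\sigma\,dv\,dv_*,
\end{equation*}
which is non-positive by $(x-y)\log(x/y)\geq 0$. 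For the off-diagonal terms I would pair $Q^{12}$ with $Q^{21}$ and use \eqref{weak_bispecies_sum} with $\psi=\log f_1$, $\phi=\log f_2$; after the same pre/post symmetrization (legitimate thanks to the micro-reversibility of $\B_{12}$), this yields
\begin{equation*}
\int Q^{12}(f_1,f_2)\log f_1\,d\uv + \int Q^{21}(f_2,f_1)\log f_2\,d\uv \;=\; -\frac{1}{2}\int (f_1'f_{2*}'-f_1f_{2*})\,\log\frac{f_1'f_{2*}'}{f_1f_{2*}}\,\B_{12}\,d\sigma\,dv\,dv_*,
\end{equation*}
again non-positive. Summing the four pieces gives $dH/dt\leq 0$.

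For the equality case, strict positivity of the kernels and the vanishing of each dissipation integrand force $f_p(v)f_p(v_*)=f_p(v')f_p(v_*')$ for $p=1,2$, and $f_1(v)f_2(v_*)=f_1(v')f_2(v_*')$. The first identity is the classical single-species functional equation whose solutions are local Maxwellians $\mathcal{M}_p^{n_p,u_p,T_p}$. Substituting these into the second and taking logarithms gives an affine relation that must vanish on the collisional manifold \eqref{sigma_representation}; comparing it with the conservation laws $m_1 v+m_2 v_*=m_1 v'+m_2 v_*'$ and $m_1|v|^2+m_2|v_*|^2=m_1|v'|^2+m_2|v_*'|^2$ forces the coefficients of $v$ and $|v|^2$ to match across the two species, i.e.\ $u_1/T_1=u_2/T_2$ and $1/T_1=1/T_2$, so that $u_1=u_2=:\bar\uv$ and $T_1=T_2=:T_{eq}$, which is precisely \eqref{global_maxwellian}.

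The main obstacle is this last step: extracting a common velocity and temperature from the cross-species constraint $f_1 f_{2*}=f_1'f_{2*}'$. Concretely, one must identify the kernel of the map $(v,v_*)\mapsto \log f_1(v)+\log f_2(v_*)$ evaluated along inter-species collisions; whereas for intra-species collisions the invariants are automatically $\{1,v,|v|^2\}$, in the mixed case one must verify that no further invariant is introduced by the different masses $m_1,m_2$ entering \eqref{sigma_representation}, so that the parameters of the two Maxwellians are truly forced to coincide. This is a linear-algebraic computation on a five-dimensional space of collisional invariants and is the only place where the binary-mixture structure genuinely enters the proof.
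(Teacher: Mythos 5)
Your argument is correct and follows essentially the same route as the paper: differentiate $H$, insert $\log f_1,\log f_2$ into the weak forms \eqref{weak_monospecies} and \eqref{weak_bispecies_sum}, and obtain non-positivity of the intra- and inter-species contributions (the paper bounds the diagonal terms via $\log x - x + 1\leq 0$ together with the cancellation of the gain--loss integrals, whereas you use the full pre/post symmetrization, but these are equivalent manipulations). For the equality case, the collision-invariant verification you flag as the remaining obstacle — that the mixed constraint $f_1f_{2*}=f_1'f_{2*}'$ with unequal masses admits no extra invariants and hence forces $T_1=T_2$ and $\bar u_1=\bar u_2$ — is precisely the step the paper does not carry out either, delegating it to \cite{desvillettes2005}.
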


\begin{proof}[Sketch of the proof]
Let us consider the expression
	\begin{equation}
		\begin{aligned}
		\frac{dH}{dt} = &\int_{\R^d} \mathcal{Q}^{11}(f_1,f_1) \log f_1(v) \, dv + \int_{\R^d} \mathcal{Q}^{12}(f_1,f_2) \log f_1(v) \, dv +\\
		+ &\int_{\R^d} \mathcal{Q}^{21}(f_2,f_1) \log f_2(v) \, dv + \int_{\R^d} \mathcal{Q}^{22}(f_2,f_2) \log f_2(v) \, dv.
		\end{aligned}
	\end{equation}
	We consider the weak formulation of the collision operators \eqref{weak_monospecies}-\eqref{weak_bispecies_sum} with $\phi=\log f_1$ and $\psi=\log f_2$:
	\begingroup
	\allowdisplaybreaks
	\begin{align*}
	\frac{dH}{dt}&= \frac{1}{2} \int_{\R^d \times \R^d \times \mathbb{S}^{d-1}} f_{1*} f_1 \log \left(\frac{f_1' f_{1*}'}{f_1 f_{1*}} \right) \B_{11} d\sigma \, dv \, dv_* \\
	&-\frac{1}{2} \int_{\R^d \times \R^d \times \mathbb{S}^{d-1}} [f_1' f_{2*}' - f_1 f_{2*}] \log \left(\frac{f_1' f_{2*}'}{f_1 f_{2*}} \right) \B_{12} d\sigma \, dv \, dv_*\\
	&+\frac{1}{2} \int_{\R^d \times \R^d \times \mathbb{S}^{d-1}} f_{2*} f_2 \log \left(\frac{f_2' f_{2*}'}{f_2 f_{2*}} \right) \B_{22} d\sigma \, dv \, dv_*\\
	&=\frac{1}{2} \LARGE \sum_{p,q \in \{1,2\}} \int_{\R^d \times \R^d \times \mathbb{S}^{d-1}} f_{p*} f_q \left[\log \left(\frac{f_p' f_{q*}'}{f_p f_{q*}} \right) - \frac{f_p' f_{q*}'}{f_p f_{q*}} + 1 \right] \B_{pq} d\sigma \, dv \, dv_*\\
	&+\frac{1}{2} \displaystyle \sum_{p,q \in \{1,2\}} \int_{\R^d \times \R^d \times \mathbb{S}^{d-1}} (f_{q*}' f_p' - f_{q*} f_p) \B_{pq} d\sigma \, dv \, dv_* \Big] \leq 0.
	\end{align*}
	\endgroup
	Indeed, since we are assuming strictly positive cross sections, $\log x - x +1 \leq 0$ and the collisional transformation $(v,v_*) \to (v',v'_*)$ is an involution, all terms on the right hand side of the equation above are non-positive and this proves the entropy dissipation.
	
	To prove the second part, we first observe that, since we are considering a sum of non-positive contributions, $\displaystyle \frac{dH}{dt}=0$ implies that all these terms must be zero. The terms corresponding to intra-species collisions (namely depending only on $f_p$) vanish if there exist $n_p \geq 0$, $\bar{\uv}_p \in \R^d$ and $T_p > 0$ such that
	\begin{equation}
		\label{local_Maxwellian}
		f_p(v) = n_p \Big(\frac{m_p}{2 \pi T_p} \Big)^{\frac{d}{2}} \, \exp\Big(-\frac{m_p}{2T_p} |\bar{\uv}_p-\uv|^2 \Big), \qquad p=1,2.
	\end{equation}
	To conclude the proof, in \cite{desvillettes2005} the authors prove that $v_1=v_2=\bar{v}$ and $T_1=T_2=T_{eq}$.
\end{proof}

\begin{rem}
Let us underline that the \textit{local} Maxwellians $\mathcal{M}_p(t,\x,\uv)$ defined in \eqref{local_Maxwellian} are maximum of the Boltzmann entropy in the mono-species setting, but they are not local equilibria for the gas mixture. Nevertheless, they play a fundamental role in the derivation of hydrodynamic limits for different regimes.
\end{rem}

Let us consider the inhomogeneous system with a hyperbolic scaling $\eps$, under the assumption that the relaxation velocity is the same for each collision operator, namely
\begin{equation}
	\begin{cases}
		\partial_t f_1 + v \cdot \nabla_x f_1= \displaystyle \frac{1}{\eps}Q^{11} (f_1, f_1) + \frac{1}{\eps}Q^{12}(f_1, f_2),\\[10pt]
		\partial_t f_2 + v \cdot \nabla_x f_2= \displaystyle \frac{1}{\eps} Q^{21} (f_2, f_1) +\frac{1}{\eps} Q^{22}(f_2, f_2).
	\end{cases}
\end{equation} 
In this framework, the equilibrium $(f_1, f_2)$ is reached in the limit $\eps \to 0$ and the distribution functions converge towards Maxwellian functions with common average velocity and temperature, as shown in Proposition \ref{prop_entropy}.

In mixtures of gases with disparate masses the exchange of momentum between particles of the same species is usually a faster process and the relaxation velocity is not the same for each collision operator. In this case, the inhomogeneous system is scaled as follows
\begin{equation}
\label{system_2knudsen}
	\begin{cases}
		\partial_t f_1 + v \cdot \nabla_x f_1= \displaystyle \frac{1}{\eps}Q^{11} (f_1, f_1) + \frac{1}{\kappa}Q^{12}(f_1, f_2),\\[10pt]
		\partial_t f_2 + v \cdot \nabla_x f_2= \displaystyle \frac{1}{\kappa} Q^{21} (f_2, f_1) +\frac{1}{\eps} Q^{22}(f_2, f_2,
	\end{cases}
\end{equation} 
where $\eps>0$ and $\kappa>0$ may not have the same order of magnitude. Despite the minimum of the entropy (the global equilibrium) is still a pair of global Maxwellian $(\mathcal{M}_1, \mathcal{M}_2)$, the dynamics involves processes at two different time scales and the convergence no longer occurs on the same timescale. Let us assume that the two species are segregated, as in Fig. \ref{Fig_bubbles_interface}. For the sake of illustration, we can think of a setting in which droplets (or bubbles) of light gas are separated by a heavy gas. The separation between the two species leads to a configuration in which inter-species collisions can occur only along the separation layers (\textit{interface}). On the contrary, intra-species collisions can occur inside the volume (in Fig. \ref{Fig_bubbles_interface}, dashed volumes for one species and white for the other species). This corresponds to the regime $\kappa = \mathcal{O}(L/T)$ where $L$ and $T$ are the respective characteristic length and time scales, and $\eps \ll \kappa$, in which intra-species collisions are much more frequent than inter-species ones. In this case, for $\eps \to 0$, each distribution function $f_p$ converges towards the local Maxwellian \eqref{local_Maxwellian} with local average velocity $\bar{v}_p$ and local temperature $T_p$, before reaching a global equilibrium. The process which leads to common average velocities and common temperatures is much slower and we assume it can be disregarded for finite times.

This regime is physically realistic for mixtures of heavy and light gases, usually referred to as $\eps$-mixtures, as described in  \cite{bisi2021hydro, galkin1994}.

\subsection{Macroscopic Quantities}
\label{Section_Macroscopic_Quantities}
Let us define the macroscopic fields for each species as suitable moments of the distribution functions $f_p$. In particular, the number density $n_p$, the mass density $\rho_p$, the average macroscopic velocity $\bar{u}_p$, the partial pressure $P_p$ and the temperature $T_p$ for each species are computed as
\begin{equation}
	n_p=\int_{\R^{d}} f_p(\uv) d\uv, \quad \rho_p \bar{u}_p=\int_{\R^{d}} m_p \uv f_p d\uv ,\quad T_p=\frac{m_p}{d \, n_p} \int_{\R^{d}} |\uv-\bar{u}_p|^2 f_p d\uv,
\end{equation}
with $\rho_p= m_p\, n_p$ and $P_p= n_p \, T_p$. We can also define the total moments for the mixture, given by
\begin{multline}
\label{total_macro_quantities}
	n= \sum_p n_p, \quad \rho= \sum_p \rho_p, \quad \rho\bar{u} = \sum_p \rho\bar{u}_p,\\ P=\sum_p P_p, \quad T= \sum_p \frac{m_p}{d \, n_p} \int_{\R^{d}} |\uv-\bar{u}|^2 f_p d\uv.
\end{multline}
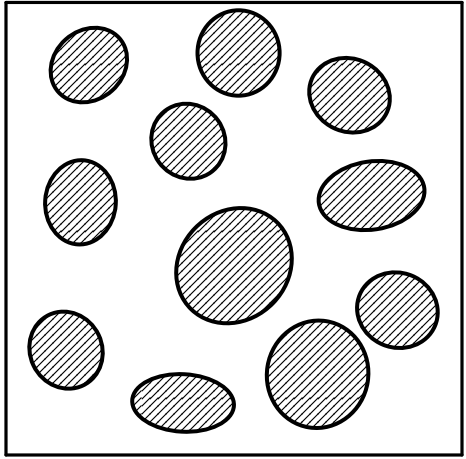
\begin{figure}[ht!]
\begin{center}
\begin{tikzpicture}[line cap=round,line join=round,>=triangle 45,x=1cm,y=1cm]
% \clip(-5, -5) rectangle (10.43,5.12);
\fill[line width=1.5pt,color=white] (-3,3) -- (-3,-3) -- (3,-3) -- (3,3) -- cycle;
\draw [line width=1.2pt] (-3,3)-- (-3,-3);
\draw [line width=1.2pt] (-3,-3)-- (3,-3);
\draw [line width=1.2pt] (3,-3)-- (3,3);
\draw [line width=1.2pt] (3,3)-- (-3,3);
\draw [rotate around={42.13759477388822:(-1.91,2.17)},line width=1.5pt, fill=black,pattern=north east lines,pattern color=black] (-1.91,2.17) ellipse (0.5370713995679743cm and 0.45633944409167876cm);
\draw [rotate around={-26.565051177077933:(1.52,1.77)},line width=1.5pt, fill=black,pattern=north east lines,pattern color=black] (1.52,1.77) ellipse (0.54128762003522cm and 0.4821745406005922cm);
\draw [rotate around={-54.462322208025896:(-0.6,1.16)},line width=1.5pt, fill=black,pattern=north east lines,pattern color=black] (-0.6,1.16) ellipse (0.507454728901928cm and 0.47739952019763227cm);
\draw [rotate around={86.30861401354811:(-2.02,0.35)},line width=1.5pt, fill=black,pattern=north east lines,pattern color=black] (-2.02,0.35) ellipse (0.559016994374947cm and 0.46475800154488955cm);
\draw [rotate around={8.583621480113953:(1.81,0.44)},line width=1.5pt, fill=black,pattern=north east lines,pattern color=black] (1.81,0.44) ellipse (0.7008191611701997cm and 0.45149473603055706cm);
\draw [rotate around={46.080924186660795:(0,-0.49)},line width=1.5pt, fill=black,pattern=north east lines,pattern color=black] (0,-0.49) ellipse (0.8068454425853496cm and 0.7144925249579236cm);
\draw [rotate around={-64.65382405805283:(-2.21,-1.61)},line width=1.5pt, fill=black,pattern=north east lines,pattern color=black] (-2.21,-1.61) ellipse (0.5186165188286124cm and 0.4740918619865859cm);
\draw [rotate around={80.90972307917617:(1.1,-1.93)},line width=1.5pt, fill=black,pattern=north east lines,pattern color=black] (1.1,-1.93) ellipse (0.7133424680722854cm and 0.6669013995752738cm);
\draw [rotate around={-22.833654177917516:(2.15,-1.08)},line width=1.5pt, fill=black,pattern=north east lines,pattern color=black] (2.15,-1.08) ellipse (0.5364905321312765cm and 0.49530000107662014cm);
\draw [rotate around={-3.122130462115711:(-0.67,-2.31)},line width=1.5pt, fill=black,pattern=north east lines,pattern color=black] (-0.67,-2.31) ellipse (0.6698345166751025cm and 0.38115387933139233cm);
\draw [rotate around={90:(0.06,2.33)},line width=1.5pt, fill=black,pattern=north east lines,pattern color=black] (0.06,2.33) ellipse (0.5662075462217281cm and 0.5400842391686972cm);
\end{tikzpicture}
\caption{Diffused interface gases in a control volume.}
\label{Fig_bubbles_interface}
\end{center}
\end{figure}

All these quantities are valid for situations in which the two species are not well separated. When we deal with
gases in which species are segregated, intra-species collisions become more frequent and different regimes must be taken into account. For this purpose, we introduce a new quantity $\alpha_p$, describing the volume fraction occupied by the species $p$. Heuristically, let us consider a control volume $V$ and let $\chi_p(x,t)$ be the probability of finding a particle of the species $p$ in $(x,t)$. Then, the volume fraction $\alpha_p(x,t)$ occupied by the phase $p$ is defined as
\begin{equation}
\label{volume_fraction}
    \alpha_p(x,t) = \frac{1}{|V|} \int_V \chi_p(x,t)\, dV.
\end{equation}
In this setting, $\sum_p \alpha_p = 1$ and the situation is depicted in Fig. \ref{Fig_bubbles_interface} for a mixture of two species.
\begin{rem}
The role of the volume fraction for each species is fundamental for models of separated gases \cite{baer1986, toro2010}. Indeed, in the passage to the macroscopic level, the system could be described using volume fractions that smoothly vary from one region to the other in the space scale of the control volume $V$.
\end{rem}

\section{Hydrodynamic Limit for Multi-species Boltzmann Equation}
\subsection{The single-velocity case}
In this section we describe the formal derivation of the Euler equations from the multi-species Boltzmann equation, under the assumption that the Knudsen number is approximately the same for all species. In the limit of small Knudsen numbers, that is $\eps \to 0$, the frequency of collisions increases to infinity and the solution can be completely described by its local hydrodynamic fields. To do so, we recall the multi-species Boltzmann equation rescaled according to the hyperbolic scaling, where all the collision operators scale with the same rate
\begin{equation}
	\label{BEs_hyperbolic_multi}
	\partial_t f_p + \uv \cdot \nabla_x f_p = \frac{1}{\eps} \mathcal{Q}_p(\f) \qquad p=1,\dots,M
\end{equation}
and we apply the Chapman-Enskog expansion as in \cite{bisi2021hydro}. Formally, we can consider an expansion of the distribution functions in term of $\eps$ as
\begin{equation}
	f_p = f_p^0 + \eps f_p^1 + \mathcal{O}(\eps^2),
\end{equation}
obtaining expansions also for the other quantities of interest. In particular the number densities $n_p$, mass velocities $u_p$ and temperatures $T_p$ can be expanded as
\begin{equation}
	n_p=n_p^0+\eps n_p^1 + \mathcal{O}(\eps^2), \quad \bar{u}_p=\bar{u}_p^0 + \eps \bar{u}_p^1 + \mathcal{O}(\eps^2), \quad T_p=T_p^0 + \eps T_p^1 + \mathcal{O}(\eps^2).
\end{equation}
Let us consider the weak form of the Boltzmann equation truncated at the zeroth order of expansion
\begin{equation}
	\label{weak_hydro}
	\begin{aligned}
		\frac{\partial}{\partial t} \int_{\R^d} \varphi(\uv) (f_p^0 + \eps f_p^1) d\uv \,+\, &\nabla_x \cdot \int_{\R^d} \varphi(\uv) \uv (f_p^0 + \eps f_p^1) d\uv \\ &= \frac{1}{\eps} \sum_{q=1}^N \int_{\R^d} \varphi(\uv) Q^{pq} (f_p^0 +\eps f_p^1,f_q^0 +\eps f_q^1) d\uv,
	\end{aligned}
\end{equation}
where $\varphi(\uv)$ is a $\mathcal{C}^\infty$ test function.\\

\begin{rem}
In the Chapman-Enskog expansion, $f_p^1$ designates a quantity that depends smoothly on the macroscopic quantities and any finite number of its derivatives with respect to the $(x,v)$-variables (but not on time). In particular, $f_p^1$ satisfies
\begin{equation}
\label{properties_conservation_f1p}
    \int_{\R^d} f_p^1 \begin{pmatrix}
    1\\
    v\\
    \frac{1}{2}|v|^2
    \end{pmatrix} \, dv = 0.
\end{equation}
\end{rem}
The limit for $\eps \to 0$ in \eqref{BEs_hyperbolic_multi} implies naturally a vanishing leading order collision operator, thus $f_i^0$ are the Boltzmann equilibria \eqref{global_maxwellian}.  The $0$-th order term of equation \eqref{weak_hydro} provides the multi-species (or multi-component) Euler equations for macroscopic quantities and the first order correction corresponds to the multi-species (or multi-component) Navier-Stokes system \cite{arnault2022chapman}. In other words, we can claim that formally the solution to the multi-species Euler equations and the solution to the multi-species Navier-Stokes equations are close to the moments of the solution $f$ to the multi-species Boltzmann equation respectively at order $\mathcal{O}(\eps)$ and $\mathcal{O}(\eps^2)$.

The multi-species Euler system can be directly recovered with a different approach, considering the collision invariants as test functions in the weak formulation
\begin{equation*}
	\frac{\partial}{\partial t} \int_{\R^d} \varphi(\uv) f_p d\uv + \nabla_x \cdot \int_{\R^{d}} \varphi(\uv) \uv f_p \,d\uv = \sum_{q=1}^N \int_{\R^d} \varphi(\uv) Q^{pq} (f_p,f_q) d\uv.
\end{equation*}
Let us consider $\varphi(\uv) \equiv 1$. Since it corresponds to a collision invariant of the Boltzmann operator, the right-hand side vanishes and it leads to the equation
\begin{equation}
	\frac{\partial n_p}{\partial t} + \nabla_x \cdot (n_p \bar{u}) = 0, \quad p=1,\dots, M,
\end{equation}
where we have used $\bar{u}_p = \bar{u}$ for all $p$.
Multiplying each equation for the respective mass $m_p$ we obtain
\begin{equation}
	\label{hydro_partial_density}
	\frac{\partial \rho_p}{\partial t} + \nabla_x \cdot (\rho_p \bar{u}) = 0, \quad p=1,\dots, M.
\end{equation}
Finally, summing over the index $p$, the continuity equation for the total density holds
\begin{equation}
	\label{hydro_total_density} 
	\frac{\partial \rho}{\partial t} + \nabla_x \cdot (\rho \bar{u}) = 0.
\end{equation}
Now consider $\varphi(\uv)=m_p \uv$ and sum over the index $p$, in order to obtain
\begin{equation}
	\label{hydro_total_momentum}
	\frac{\partial}{\partial t} (\rho \bar {u}) + \nabla_x \cdot \Big[\displaystyle \sum_{p=0}^P \Big( \rho_p \bar{u} \otimes \bar{u} + n_p \, T_{eq}\Big)\Big] = 0,
\end{equation}
where the single-species temperature $T_p = T_{eq}$. 
Let us consider $\varphi(\uv)=m_p |\uv|^2$ and sum again over the index $p$. Thus, we obtain
\begin{equation}
	\frac{\partial}{\partial t} \left(\frac{1}{2}\rho |\bar {u}|^2 + \frac{d}{2} n \, T_{eq} \right) + \nabla_x \cdot \left[\displaystyle \left( \frac{1}{2}\rho |\bar{u}|^2 + \frac{d+2}{2} n \, T_{eq} \right)\bar{u}\right] = 0.
\end{equation}
Thus, in the limit $\eps \to 0$, we have formally obtained the so-called multi-species Euler limit:
\begin{equation}
	\label{multispecies_euler}
	\begin{cases}
		\displaystyle \frac{\partial \rho_p}{\partial t} + \nabla_x \cdot (\rho_p \bar{u}) = 0, \quad p=1,\dots, M,\\[10pt]
		\displaystyle \frac{\partial}{\partial t} (\rho \bar {u}) + \nabla_x \cdot \left[ \sum_{p=1}^N \rho_p \left( \bar{u} \otimes \bar{u} + n_p \, T_{eq} \right) \right] = 0,\\[18pt]
		\displaystyle \frac{\partial}{\partial t} \left(\frac{1}{2}\rho |\bar {u}|^2 + \frac{d}{2} n\,T_{eq} \right) + \nabla_x \cdot \left[\displaystyle \left( \frac{1}{2}\rho |\bar{u}|^2 + \frac{d+2}{2}n\,T_{eq} \right)\bar{u}\right] = 0,
	\end{cases}
\end{equation}
with a single velocity and a single temperature for the whole mixture.

\subsection{The multi-velocity case}
Now, we are interested in describing the formal hydrodynamic limit in a different regime, in order to recover the isentropic two-phase flow model proposed in \cite{toro2010}, which in the one-dimensional case reads as
\reqnomode
\begin{subnumcases}
	\displaystyle \frac{\partial \rho}{\partial t} + \frac{\partial}{\partial x} (\rho \bar{u}) = 0, \label{RDT_1}\\[10pt]
	\displaystyle \frac{\partial}{\partial t}(\alpha_1 \rho) + \frac{\partial}{\partial x} (\alpha_1 \rho \bar{u}) = -\frac{1}{\tau}(P_2-P_1), \label{RDT_2}\\[10pt]
	\displaystyle \frac{\partial}{\partial t} (\alpha_1 \rho_1) + \frac{\partial}{\partial x} (\alpha_1 \rho_1 \bar{u}_1) = 0, \label{RDT_3}\\[10pt]		
	\displaystyle \frac{\partial}{\partial t} (\rho \bar{u} ) + \frac{\partial}{\partial x} \Big(\alpha_1 \rho_1 \bar{u}_1^2 + \alpha_1 P_1 + \alpha_2 \rho_2 \bar{u}_2^2 + \alpha_2 P_2 \Big) = 0, \label{RDT_4}\\[10pt]
	\displaystyle \frac{\partial}{\partial t} (\bar{u}_1-\bar{u}_2) + \frac{\partial}{\partial x} 
	\left(\frac{1}{2}\bar{u}_1^2 + h_1 -\frac{1}{2}\bar{u}_2^2 -h_2 \right) = -\zeta (\bar{u}_1 - \bar{u}_2), \label{RDT_5}
\end{subnumcases}
where $\alpha_p \in (0,1)$ is the volume fraction defined in \eqref{volume_fraction}, $\tau$ and $\zeta$ are opportune constants, and the partial pressures $P_p$ will be described later. In this framework, the total density $\rho$ is defined as 
\begin{equation}
    \rho = \alpha_1\,\rho_1 + \alpha_2\,\rho_2,
\end{equation}
and the mixture velocity $\bar{u}$ as
\begin{equation}
\label{mixture_velocity}
    \bar{u} = \kappa_1\,\bar{u}_1 + \kappa_2\,\bar{u}_2, \qquad \kappa_p = \frac{\alpha_p\,\rho_p}{\rho}.
\end{equation}
Compared to the model of two-phase mixtures proposed by Baer and Nunziato in \cite{baer1986}, the main advantage of this alternative model is the possibility to write the equations in a conservative form.

Equations \eqref{RDT_1} and \eqref{RDT_3} describe the conservation of the total mass and the partial mass, respectively. Equation \eqref{RDT_2} consists of the balance of the evolution of the volume fraction with respect to a pressure relaxation term. Further, Equation \eqref{RDT_4} gives the conservation of the total momentum $\rho\bar{u}$ and Equation \eqref{RDT_5} the balance of the relative velocity $\bar{u}_1-\bar{u}_2$ with respect to a friction term.\\
Since the system \eqref{RDT_1}-\eqref{RDT_5} is isentropic, we set $P_p = P(\rho_p) = c \, \rho_p^\gamma$. This closure law for the pressure terms implies that an equation for the second moment is not required, differently from \eqref{multispecies_euler}.

To obtain the hyperbolic two-phase flow model as a limit system from a kinetic model, we take into account the case of a gas mixture as in Fig. \ref{Fig_bubbles_interface}. We recall that in this setting, self-collisions are much more frequent than collisions among particles of different species, namely $\kappa=1$ and $\eps \ll 1$ in \eqref{system_2knudsen}, as described in Section \ref{Section_Macroscopic_Quantities}. For this purpose, we then consider the following rescaling of the multi-species Boltzmann equation
\begin{equation}
	\label{BEs_multi_regime}
	\begin{cases}
		\partial_t f_p + \uv \cdot \nabla_\x f_p = \displaystyle \frac{1}{\eps} Q^{pp}(f_p,f_p) + \sum_{\substack{q=1 \\ q \neq p} }^M Q^{pq} (f_p,f_q),\\ 
		f_p(0,\x,\uv)= f_{p,0} (\x,\uv).
	\end{cases}
\end{equation}
The weak form of the Boltzmann equation in this scaling becomes:
\makeatletter
\tagsleft@true
\makeatother
\begin{multline}
\label{weak_noaverage}
	\frac{\partial}{\partial t} \int_{\R^d} \varphi(v) f_p d\uv + \nabla_x \cdot \int_{\R^{d}} \varphi(\uv) \uv f_p d\uv \\ = \frac{1}{\eps} \int_{\R^d} \varphi(\uv) Q^{pp}(f_p,f_p) d\uv + \sum_{\substack{q=1 \\ q \neq p}}^M \int_{\R^d} \varphi(\uv) Q^{pq} (f_p,f_q) d\uv.
\end{multline}
Next, we average the weak form over the control volume $V$:
\begin{multline*}
	\int_V \chi_p \, \frac{\partial}{\partial t} \int_{\R^d} \varphi(v) f_p d\uv\, dV + \int_V \, \chi_p \nabla_x \cdot \int_{\R^{d}} \varphi(\uv) \uv f_p d\uv\, dV \\ = \frac{1}{\eps} \int_V \chi_p \int_{\R^d} \varphi(\uv) Q^{pp}(f_p,f_p) d\uv\, dV + \sum_{\substack{q=1 \\ q \neq p}}^M \int_V \chi_p \int_{\R^d} \varphi(\uv) Q^{pq} (f_p,f_q) d\uv\, dV,
\end{multline*}
where $\chi_p$ is the probability to find a particle of species $p$ in $(x,t)$. Each $\chi_p$ is associated to the corresponding volume fraction $\alpha_p$, as introduced in \eqref{volume_fraction}.
 Averaging procedure are widely used in the derivation of multi-phase models from the corresponding single-phase formulations (e.g. from multi-component Euler systems), see for instance \cite{drew1998, abgrall2003}. In this work, the spatial averaging procedure is instead directly applied to the kinetic equation, following the same rules presented in \cite{drew1998}. Indeed, the averaging procedure over a control volume $V$ commutes with time and space derivatives, and the following calculation rules hold for any function $g$:
\setlist[itemize,1]{label={\small \textbullet}}
\begin{itemize}
    \item Gauss rule
        \begin{equation*}
            \int_V \chi_p \nabla_x g \, dV =  \int_V \nabla_x (\chi_p\,g) \, dV - \int_{\partial V} g_{\text{interface}}\, \nabla_x \chi_p \, dV,
        \end{equation*}
    \item Leibniz rule
        \begin{equation*}
            \int_V \chi_p \partial_t g \, dV =  \int_V \partial_t (\chi_p\,g) \, dV - \int_{\partial V} g_{\text{interface}}\, \partial_t \chi_p \, dV,
        \end{equation*}
\end{itemize}
where $g_{\text{interface}}$ is the value of $g$ at $\partial V$. The boundary $\partial V$ of the domain includes both the external physical boundary and all interfaces separating the phases, where state variables may exhibit discontinuities. Let us then define macroscopic fields as moments of the distribution functions averaged over a unit control volume $V$ (avoiding the scaling factor $|V|^{-1}$ for the sake of simplicity) according to the quantity $\chi_p$, namely
	\begin{equation}
		\int_V \int_{\R^d} \chi_p m_p \, f_p(\uv) d\uv\,dV=\alpha_p \rho_p, \qquad \int_V \int_{\R^d} \chi_p m_p \, \uv \, f_p(\uv) d\uv\,dV=\alpha_p \rho_p \bar{\uv}_p,
	\end{equation}
where we recall that $\alpha_p$ stands for the volume fraction. This choice allows us to give a kinetic interpretation of the macroscopic quantities involved in \eqref{RDT_1}-\eqref{RDT_5}.
The probability $\chi_p$ evolves according to the equation 
\begin{equation}
\label{evolution_chi}
    \partial_t \chi_p + \bar{u}_{\text{interface}} \cdot \nabla_x \chi_p =0,
\end{equation}
where $\bar{u}_{\text{interface}}$ is the interface velocity between the two phases.
\begin{rem}
The spatial average procedure is fundamental to obtain equations for different phases, describing ``non-local'' processes which happen at the scale of the control volume.
\end{rem}
In order to derive the equation for the fraction $\alpha_1$, we have to determine an expression for the transfer of volume in the dynamics. We will show that the volume transfer is caused by the difference in pressures between the two gases, which in the literature is usually assumed as a phenomenological hypothesis \cite{baer1986, saurel1999, flatten2011}.

\begin{center}
	\begin{figure}
		\begin{tikzpicture}[line cap=round,line join=round,>=triangle 45,x=1cm,y=1cm, scale=1.3]
		\begin{scope}[opacity=0.3]
            \fill[pattern=north east lines, pattern color=black] (2.5,0) rectangle (5,5);
        \end{scope}
			\draw[line width=0.8pt] (0,0) -- (0,5) -- (5,5) -- (5,0) -- cycle; % Changed fill to draw
			\draw [line width=0.8pt] (0,0)-- (0,5);
			\draw [line width=0.8pt] (0,5)-- (5,5);
			\draw [line width=0.8pt] (5,5)-- (5,0);
			\draw [line width=0.8pt] (5,0)-- (0,0);
			\draw [line width=0.8pt] (2.5,0)-- (2.5,5);
			\draw [line width=0.8pt,dash pattern=on 1pt off 1pt] (1.54634,2.62442)-- (3.45366,2.62442);
			\draw (0.14,5.02) node[anchor=north west] {\large $\mathbf{1}$};
			\draw (4.56,5.02) node[anchor=north west] {\large $\mathbf{2}$};
			\draw [ultra thick, latex'-latex', line width=0.6pt] (1.54634,2.12442)-- (2.46,2.12442);
			\draw [ultra thick, latex'-latex', line width=0.6pt] (3.44634,2.12442)-- (2.54,2.12442);
			\begin{small}
				\draw [fill=black] (1.54634,2.62442) circle (1pt);
				\draw[color=black] (1.25,2.88) node { $x_1(t)$};
				\draw [fill=black] (3.45366,2.62442) circle (1pt);
				\draw[color=black] (3.81,2.88) node { $x_2(t)$};
				\draw [fill=black] (2.5,2.62442) circle (1pt);
				\draw[color=black] (2.82,2.88) node {$x(t)$};
				\draw[color=black] (2.09,1.88) node { $\eta_1(t)$};
				\draw[color=black] (3.09,1.88) node { $\eta_2(t)$};
			\end{small}
		\end{tikzpicture}
		\caption{Interface between two different species.}
		\label{figure_interface}
	\end{figure}
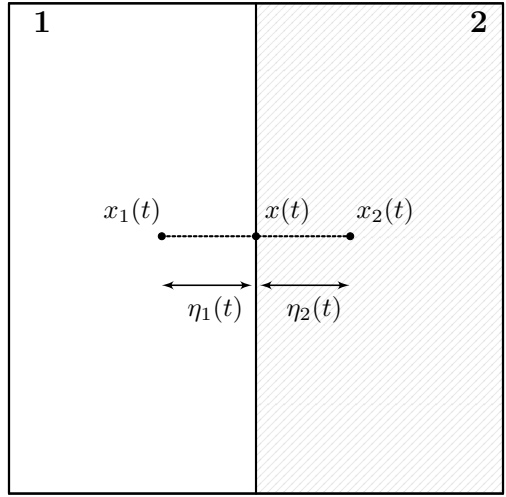
\end{center}

Let us consider a zoom of the setting of Fig. \ref{Fig_bubbles_interface} on the thin layer which separates the volume occupied by one species from the volume occupied by the other, as shown in Fig. \ref{figure_interface}.We now focus on the displacement of the interface separating the two species. Following the idea of Bresch, Burtea and Lagoutière in \cite{bresch2023}, at this scale the volume fraction variation can be reinterpreted as a one-dimensional shift orthogonal to the separating interface. Denoting with $x(t)$ the average distance of particles from the interface of separation between the two species, let us consider $\eta_1(t):=x_1(t)-x(t)$ and $\eta_2(t):=x(t)-x_2(t)$ where $x_1$ and $x_2$ are the local centers of mass of the species $1$ and $2$ respectively. We indeed perform a spatial average over a thin layer which is much smaller than $V$, so we refer to the ``centers of mass'' of the molecules within this tiny volume.

Assuming that $\eta_p \ll \text{diameter}(V)$, let us define
\begin{equation}
	\alpha_1(t)=\frac{\eta_1(t)}{\eta_1(t)+\eta_2(t)}.
\end{equation}
Let us introduce the material derivative  as 
\begin{equation}
	\frac{D}{Dt} := \frac{\partial}{\partial t} + \bar{u} \cdot \nabla_x,
\end{equation}
where we recall that $\bar{u}$ is the mixture velocity. Since we are considering segregated gases, the mixture velocity satisfies $\bar{u} = \bar{u}_p$ in the region occupied only by species $p$.\\
In the reference frame of the interface, the material derivative of the volume fraction $\alpha_1$ is given by
\begin{equation}
	\frac{D}{Dt} \alpha_1(t) = \frac{D_t \eta_1 (\eta_1+\eta_2) - \eta_1 (D_t \eta_1 + D_t \eta_2)}{(\eta_1+\eta_2)^2},
\end{equation} 
where we have omitted the explicit dependence of $\eta_p$ on time. As we considered a small shift with respect to the interface, we have
\begin{equation}
	D_t \eta_p(t) = \bar{u}_p(t,x_p(t)) - \bar{u}(t,x(t)).
\end{equation}
Therefore, the expression of the derivative of the volume fraction becomes
\begin{equation}
	\label{volume_fraction_derivative}
	\begin{aligned}
		\frac{D}{Dt} \alpha_1(t) &= \frac{(\bar{u}_1(t,x_1(t))-\bar{u}(t,x(t))) (\eta_1+\eta_2) - \eta_1 (\bar{u}_1(t,x_1(t))-\bar{u}_2(t,x_2(t))) }{(\eta_1+\eta_2)^2}\\
		&=\frac{\eta_1 (\bar{u}_2(t,x_2(t))-\bar{u}(t,x(t)))+ \eta_2 (\bar{u}_1(t,x_1(t))-\bar{u}(t,x(t)))}{(\eta_1+\eta_2)^2}
.	\end{aligned}
\end{equation}
Let us assume that across the diffuse interface there is a ``boundary layer'' really separating the two gases, which provides the existence of a continuous stress tensor at the interface: namely given a coefficient $\lambda$ representing the kinematic viscosity, the quantity
\begin{equation}
\label{continuity_stress_tensor_P}
	P(t,x(t))-\sqrt{\eps}\,\lambda \partial_x \bar{u}(t,x(t))
\end{equation}
is continuous at the interface. 
The derivation of \eqref{continuity_stress_tensor_P} at the boundary will be detailed in the separate Section \ref{Derivation_Interface} for the reader's convenience.

Approximating the derivative at first order with respect to $\eta_1$ and $\eta_2$ respectively, we have
\begin{multline}
	\displaystyle P(t,x_1(t)) - \sqrt{\eps} \, \lambda \frac{\bar{u}_1(t,x_1(t))-\bar{u}(t,x(t))}{\eta_1(t)}  \\ = P(t,x_2(t)) - \sqrt{\eps} \, \lambda \frac{\bar{u}(t,x(t))-\bar{u}_2(t,x_2(t))}{\eta_2(t)} + \mathcal{O}(\eta_1+\eta_2).
\end{multline}
Finally, substituting this expression in \eqref{volume_fraction_derivative} and considering the limit for $\eta_1+\eta_2 \to 0$, we obtain
\begin{equation}
\label{eq_diff_pressure}
	P_1-P_2 = \lambda \sqrt{\eps} \frac{(\eta_1+\eta_2)^2}{\eta_1\eta_2} D_t \alpha_1(t).
\end{equation}
Now, let us compute
\begin{equation}
	\frac{\partial}{\partial t} (\alpha_1 \rho) + \nabla_x (\alpha_1 \rho \bar{u}) = \alpha_1 \frac{\partial \rho}{\partial t} + \rho \frac{\partial \alpha_1}{\partial t} + \alpha_1 \nabla_x (\rho \bar{u}) + (\rho \bar{u}) \cdot \nabla_x (\alpha_1).
\end{equation}
Recalling the definition of the material derivative and using Equations \eqref{hydro_total_density}-\eqref{eq_diff_pressure}, we obtain
\begin{equation}
\label{relation_pressure_volume}
	\frac{\partial}{\partial t} (\alpha_1 \rho) + \nabla_x \cdot (\alpha_1 \rho \bar{u}) = \rho \frac{D}{Dt} (\alpha_1) = - \frac{1}{\tau} ( P_2-P_1),
\end{equation}
where $\tau := \displaystyle \frac{\sqrt{\eps} \lambda}{\rho} \frac{(\eta_1+\eta_2)^2}{\eta_1\,\eta_2}$ is the relaxation rate.\\
Thus, the pressure relaxation process drives a volume variation which evolves according to \eqref{relation_pressure_volume}.
In particular, we observe that the pressure relaxation implies a volume transfer towards the gas with highest pressure, according to the relaxation parameter $\tau >0$.
\begin{rem}
The macroscopic velocity $\bar{u}$ for each point $(x,t)$ is given by \eqref{mixture_velocity}, obtained from the kinetic equation as \eqref{total_macro_quantities}. Inside a region occupied only by one species $p$, the macroscopic velocity automatically reduces to the single-species macroscopic velocity $\bar{u}_p$.
\end{rem}

Then, since we have conservation of the single-species mass and conservation of total momentum at the kinetic level, the derivation of \eqref{RDT_3}-\eqref{RDT_4} is obtained taking respectively $\varphi(\uv)=1, \, m_p \uv$, averaging in space and summing over the index $p$, using Gauss and Leibniz rules and \eqref{relation_pressure_volume}. We observe that, despite setting the number of species $M$ to $2$, the validity of these equations remains independent from the number of species. The equation for total mass conservation \eqref{RDT_1} is a straightforward consequence of the conservation of single-species mass.\\
Finally, we would like to recover the equation for the relative velocity. Considering the test function $\varphi(\uv)=m_p \uv$ in the weak formulation \eqref{weak_noaverage}, we obtain 
\begin{equation}
	\label{relative_velocity_eq}
	\frac{\partial}{\partial t} (\rho_p \bar {u}_p) + \nabla_x \cdot \Big( \rho_p \bar{u}_p \otimes \bar{u}_p + P_p\Big) = \sum_{q \neq p} \mathbf{R}_{p,q},
\end{equation}
where $\mathbf{R}_{p,q}$ is the momentum exchange rate. Following the idea presented in \cite{bisi2012multi}, we shall compute explicitly $\mathbf{R}_{p,q}$ in the case of Maxwellian molecules and hard spheres molecules. To this aim, in Section \ref{Subsection_Computation_Exchange} we compute the product of two local Maxwellians 
$\mathcal{M}_p(v) \, \mathcal{M}_q(w)$, whose expression depends on the center of mass $\mathcal{V}$ and relative velocities $|v-w|$. Then, assuming pseudo-Maxwellian molecules (constant kernel $\mathbf{B}$ independent from the species) or hard-spheres molecules (kernel of the form $\mathbf{B}=|u|$ independent from the species), the explicit computation described in \ref{Subsection_Computation_Exchange} leads to the following expression for the momentum exchange rate:
\begin{equation}
	\label{bisi_exchange}
	\mathbf{R}_{p,q}=-\xi_{p,q} \rho_p \rho_q (\bar{u}_p-\bar{u}_q), \quad \xi_{p,q} \in \R,
\end{equation}
where the coefficients $\xi_{p,q}$ satisfy the symmetric property $\xi_{p,q}=\xi_{q,p}$. We first notice that the computation can be done for general distribution function $f_p$ and $f_q$, if we are considering pseudo-Maxwellian molecules. Indeed, for this particular choice, the interaction kernel does not depend on the deviation angle nor the relative velocity and the exchange rate can be written using \eqref{weak_bispecies} and the expression for the moments as
	\begin{equation}
		\mathbf{R}_{p,q} = \int_{\R^d \times \R^d \times \mathbb{S}^{d-1}} m_p m_q f_p(v) f_q(v_*) (v-v_*) d\sigma dv\,dv_* =   \rho_p \rho_q (\bar{u}_p-\bar{u}_q).
	\end{equation} 
	For general interaction particle, the presence of the interaction kernel make the computation really insidious and their explicit expressions are given in the Appendix \ref{Subsection_Computation_Exchange}.\\
Now, we can rewrite explicitly the equation \eqref{relative_velocity_eq} and subtract the equation for the other species, obtaining
\begin{multline}
	\frac{1}{\rho_1} \frac{\partial}{\partial t} (\rho_1 \bar{u}_1) - \frac{1}{\rho_2} \frac{\partial}{\partial t} (\rho_2 \bar{u}_2) \\ + \frac{1}{\rho_1} \nabla_x \cdot \left(\rho_1 \, \bar{u}_1 \otimes \bar{u}_1 + P_1\right) - \frac{1}{\rho_2} \nabla_x \cdot  \left(\rho_2 \bar{u}_2 \otimes \bar{u}_2 + P_2\right) \\ = \frac{\mathbf{R}_{1,2}}{\rho_1}-\frac{\mathbf{R}_{2,1}}{\rho_2} = -\xi \left(\rho_2+\rho_1 \right) \,(\bar{u}_1-\bar{u}_2).
\end{multline}
We can rewrite the pressure term in terms of the internal energy and the phase enthalpy as
\begin{equation}
	\label{pressure_hydro}
	P_p = \rho_p ( h_p -e_p),
\end{equation}
where $h_p$ is the phase enthalpy and $e_p$ is the internal energy. Assuming a polytropic gas for which $P(\rho)=\rho^\gamma$, the following relation holds
\begin{equation}
\label{pressure_enthalpy_derivative}
    \partial_x P_p = \rho_p \partial_x h_p.
\end{equation}
Indeed, for an ideal gas it holds $P = \rho\,e (\gamma-1)$
\begin{equation}
    \frac{1}{\rho} \nabla_x P = \frac{\gamma}{\rho}\, \rho^{\gamma-1} \nabla_x \rho, \qquad \partial_x h = \nabla_x \left(\frac{P}{\rho} + e \right) = \gamma \rho^{\gamma-2} \nabla_x \rho,
\end{equation}
which leads to \eqref{pressure_enthalpy_derivative}.\\
Therefore, using \eqref{bisi_exchange} and \eqref{pressure_hydro} and defining the quantity
\begin{equation}
	\zeta := \xi \, (\rho_1+\rho_2)
\end{equation} 
we recover the following equation in the one-dimensional case
\begin{equation}
		\frac{\partial}{\partial t} ( \bar{u}_1 - \bar{u}_2) + \frac{\partial}{\partial x} \left(\frac{1}{2}\bar{u}_1^2 - \frac{1}{2}\bar{u}_2^2 + h_1 - h_2 \right)= - \zeta (\bar{u}_1-\bar{u}_2).
\end{equation}
Therefore, through a formal hydrodynamic limit of the multi-species Boltzmann equation when $\eps \to 0$ we have obtained a model similar to the two-phase flow model introduced in \cite{toro2010}.

\subsubsection{Derivation of the continuous stress tensor at the interface}
\label{Derivation_Interface}
The idea is to obtain an expression for the stress tensor at the boundary separating two phases, inspired by the the compressible boundary layer equations derived from the Boltzmann equation in \cite{castulik2002} (see also \cite{golse2011} for the incompressible case).\\
Let us focus on the thin boundary layer across the interface of Fig. \ref{figure_interface} and let us introduce a Prandtl scaling. From Prandtl theory, we know that if the Knudsen number is $\mathcal{O}(\eps)$ and the fluid is compressible, then the thickness of such a region can be assumed $\mathcal{O}(\sqrt{\eps})$. For the sake of simplicity, we can assume that the interface is located at $x_D=0$ and the species $1$ moves in the region described by the half space $x'=(x_1,\dots, x_{D-1})$ with $x_D<0$. Let us rescale the independent variables by obtaining:
\begin{equation}
    \x'=\x, \qquad x'_D=\frac{x_D}{\sqrt{\eps}}, \qquad \text{ and } \qquad t'=t.
\end{equation}
The rescaled Boltzmann equation reads as
\begin{equation}
\label{Rescaled_BEs}
    \partial_t f_p + \uv' \cdot \nabla_{x'} f_p + \frac{1}{\sqrt{\eps}} v_D \partial_{x_D} f_p = \frac{1}{\eps} \mathcal{Q}_{pp}(f_p,f_p) + \mathcal{Q}_{pq}(f_p,f_q),
\end{equation}
where the advection term in the $D$-direction describes the effect of the boundary layer at the interface. Since the process is ``local'' across the interface, no spatial averaging procedure is performed.\\
By using the Chapman-Enskog expansion introduced before in the weak formulation of \eqref{Rescaled_BEs} and matching different powers of $\eps$, the truncation at the $0$-th order level needs $\mathcal{Q}_{pp}=0$ (which implies $f_p$ must be a local Maxwellian) and
\begin{equation}
\label{eq:sqrteps:chapman}
    \frac{1}{\sqrt{\eps}} \int_{\R^d} \varphi(v)\, v_D \, \partial_{x_D}\,\left(f_p^0 + \eps f_p^1\right)\, dv = 0.
\end{equation}
Since \eqref{eq:sqrteps:chapman} is the only $1/\sqrt{\eps}$ term, this implies
\begin{equation}
    \int_{\R^d} \varphi(v)\, v_D \, \partial_{x_D}\,\left(f_p^0 + \eps f_p^1\right)\, dv = 0.
\end{equation}
Let us first take $\varphi(v) = m_p$ in \eqref{eq:sqrteps:chapman} and sum over the species $p$, from which using \eqref{properties_conservation_f1p} we have
\begin{equation}
\label{cons_mass_prandtl}
    \partial_{x_D} \left(\displaystyle \sum_p\rho_p\,\bar{u}_p^D \right) = 0,
\end{equation}
where $u^D$ represents the component of $u$ in the $D$-direction.\\
Then, taking $\varphi(v) = m_p\,v_D$ in \eqref{eq:sqrteps:chapman} and summing over the species $p$, we obtain
\begin{equation}
\label{div_stress}
    \partial_{x_D} \left(\displaystyle \sum_p\rho_p\, (u_p^D)^2 + \displaystyle \sum_p P_p - \displaystyle \sum_p \sqrt{\eps} \lambda \, \partial_{x_D} \bar{u}_p^D - \displaystyle \sum_p \eps \lambda \, \mathcal{D}_{x'}(\bar{u}_p) \right) = 0,
\end{equation}
where $\lambda$ is the proper viscosity coefficient. The components depending on $\sqrt{\eps}$ and $\eps$ are obtained as corrections in $\eps$ in the Chapman-Enskog expansion. Indeed, as shown in \cite{golse2005, saintraymond2009}, the conservation law at order $0$ and the Hilbert's Lemma leads to a first-order flux correction of the form $\eps\,\text{div}_x (\lambda \mathcal{D}(\bar{u}_p))$, where 
\begin{equation}
    \mathcal{D}(u) = \frac{1}{2} \left ( \nabla_x u + (\nabla_x u)^T \right) - \frac{1}{3} \left( \nabla_x \cdot u \right) \text{Id},
\end{equation}
namely $\mathcal{D}(\bar{u}_p)$ is the traceless part of the deformation tensor of $\bar{u}_p$. In the one dimensional case it reduces to $D(u) = \partial_x u$.\\
Let us focus on the expression \eqref{div_stress} at the interface $x_D=0$. We recall that the two species are separated, therefore we can assume without loss of generality that the half-plane $x_D<0$ is occupied by the species $1$ and the half-plane $x_D>0$ is occupied by the species $2$. This implies that the partial momentum $\rho_2\,u_2$ and the partial pressure $P_2$, related to the species $2$, are null in the half-plane $x_D<0$ and viceversa. The first equation \eqref{cons_mass_prandtl} implies
\begin{equation}
    \rho_1(t,x(t))\,\bar{u}_1(t,x(t)) = k =  \rho_2(t,\tilde{x}(t))\,\bar{u}_2(t,\tilde{x}(t)),
\end{equation}
where  $x_D<0$ and $\tilde{x}_D>0$. In other words the interface is a contact discontinuity separating the two gases.\\
By using this relation in the second equation \eqref{div_stress} and truncating at first order in $\eps$, we obtain
\begin{multline}
   k \,(\bar{u}_1(t,x(t)) +  P_1(t,x(t)) - \sqrt{\eps}\,\lambda \frac{\partial \bar{u}_1(t,x(t))}{\partial x_D} \\ = k \,\bar{u}_2(t,\tilde{x}(t)) + P_2(t,\tilde{x}(t)) - \sqrt{\eps}\,\lambda \frac{\partial \bar{u}_2(t,\tilde{x}(t))}{\partial x_D},
\end{multline}
where $x_D<0$ and $\tilde{x}_D>0$. By imposing continuity conditions of the normal velocity at the interface, we finally obtain the continuity of the stress tensor at the boundary at first order in $\eps$, namely
\begin{equation}
   P_1-P_2 = \sqrt{\eps}\, \lambda \partial_{x_D} \bar{u}_1(t,x(t)) - \sqrt{\eps}\, \lambda \partial_{x_D} \bar{u}_2(t,x(t)).
\end{equation}
	
\subsubsection{Comparison with the Baer-Nunziato model}
	Baer-Nunziato-type models \cite{baer1986, abgrall2003}, differently to the model of two-phase compressible flow with two velocities and two pressures proposed in \cite{romenski2004compressible}, are not in a conservative form. From a modeling point of view, the difference between these two approaches relies in the physical description of the phenomena: the conservative model has been proposed to deal with processes in which the thermal state of phases are almost in equilibrium state, while Baer-Nunziato-type models can take into account thermal non-equilibrium processes. To this aim, in this subsection we propose a comparison with the isentropic model derived above, by considering Baer-Nunziato models in which thermal processes are ignored and the phases behavior is isentropic. As observed in \cite{romenski2004compressible, theindumbser2022}, the one-dimensional versions of the models are very similar, if properly rewritten. The main difference is the definition of an interfacial pressure, which provides different results in the modelling of the phenomena. The isentropic version of a simplified Baer-Nunziato model reads as follows
	\reqnomode
    \begin{subnumcases}
    	\displaystyle \frac{\partial}{\partial t}(\alpha_1) + u_{\text{interface}} \partial_x (\alpha_1) = -\frac{1}{\tau}(P_2-P_1), \label{BN_1}\\[10pt]
    	\displaystyle \frac{\partial}{\partial t} (\alpha_1 \rho_1) + \partial_x (\alpha_1 \rho_1 \bar{u}_1) = 0, \label{BN_2}\\[10pt]	
    	\displaystyle \frac{\partial}{\partial t} (\alpha_1 \rho_1 \bar {u}_1) + \partial_x \Big(\alpha_1 \rho_1 \bar{u}_1^2 + \alpha_1 P_1 \Big) = P_{\text{interface}} \partial_x \alpha_1 + \tilde{\zeta} (\bar{u}_2 -\bar{u}_1), \label{BN_3}\\[10pt]
    	\displaystyle \frac{\partial}{\partial t} (\alpha_2 \rho_2) + \partial_x (\alpha_2 \rho_2 \bar{u}_2) = 0, \label{BN_4}\\[10pt]
    	\displaystyle \frac{\partial}{\partial t} (\alpha_2 \rho_2 \bar {u}_2) + \partial_x \Big(\alpha_2 \rho_2 \bar{u}_2^2 + \alpha_2 P_2 \Big) = P_{\text{interface}} \partial_x \alpha_2 - \tilde{\zeta} (\bar{u}_2 -\bar{u}_1),\label{BN_5}
    \end{subnumcases}
    where $u_{\text{interface}}$ and $P_{\text{interface}}$ are  the interfacial velocity and the interfacial pressure respectively. The definition of interfacial velocity is the same for both models 
    \begin{equation}
        u_{\text{interface}} = \bar{u} = \frac{\alpha_1 \rho_1 \bar{u}_1 + \alpha_2 \rho_2 \bar{u}_2}{\alpha_1\rho_1 + \alpha_2\rho_2},
    \end{equation}
    since the first equation \eqref{BN_1} could be easily derived from \eqref{RDT_2}-\eqref{RDT_3}. Equations \eqref{BN_2}-\eqref{BN_4} are derived from the multi-species Boltzmann equation \eqref{BEs_multi_regime}, as already shown for the conservative model.\\
    The main difference lies in the equation for the balance of the single-species momentum. Indeed, by taking $\varphi(v) = m_p\, v$ in \eqref{weak_noaverage}, followed by a spatial averaging procedure, we recover Equations \eqref{BN_3}-\eqref{BN_5}, where $\tilde{\zeta}$ now is given by
    \begin{equation}
        \tilde{\zeta} :=  \frac{\xi}{\alpha_1\,\alpha_2} \, \rho.
    \end{equation}
    We conclude that both models could be derived from the multi-species Boltzmann equation \eqref{BEs_multi_regime}, by considering different closure assumptions and different quantities of interest.
\section{Conclusions}
In this paper we have formally derived the hyperbolic model introduced in \cite{romenski2004compressible} to reproduce the dynamics of two-phase flows, starting from a kinetic model for gas mixtures. We have first investigated the properties of the multi-species Boltzmann equations and we have given a brief description of the behavior of the limit solution. We have then examined the hydrodynamic limit of the kinetic model under the regime of resonant intra-species collisions, aiming to derive different equilibrium velocities for the two species. This choice allows to recover a description for two-phase flows at the macroscopic level. We have kept an isentropic closure assumption on the pressure term, in order to recover the original model. This assumption could be weakened in future works, leading to different macroscopic equations for multi-phase flows, see for instance \cite{thomann2023}.

Several new directions could be pursued. The idea is to develop an asymptotic-preserving numerical scheme for the stiff kinetic model \eqref{BEs_multi_regime}. This will enable numerical simulations, to provide a comparison between the behavior of the kinetic solution and that of the macroscopic equations \eqref{RDT_1}-\eqref{RDT_5} in the limit $\eps \to 0$. Further, we could extend this strategy to more complex kinetic models, taking into account Boltzmann equations with different type of collisions, such as mixtures of gases undergoing inelastic collisions \cite{reytenna2024}. 

\section*{Acknowledgement}
GP and TT would like to thank Michael Dumbser for the fruitful discussion on this topic. TT would also like to thank Andrea Bondesan for his help.\\
The authors received funding from the European Union's Horizon Europe research and innovation program under the Marie Skłodowska-Curie Doctoral Network DataHyking (Grant No. 101072546). TR and TT are also supported by the French government, through the UniCA$_{JEDI}$ Investments in the Future project managed by the National Research Agency (ANR) with the reference number ANR-15-IDEX-01.
46). GP is also supported by MUR (Ministry of University and Research) under the PRIN-2022 project on “High order structure-preserving semi-implicit schemes for hyperbolic equations” (number 2022JH87B4).\\
GP and TT are members of the INdAM Research National Group of Scientific Computing (INdAM-GNCS).\\

\subsection*{Data Availability} Data sharing not applicable to this article as no datasets were generated or analysed during the current study.

\subsection*{Conflict of interest} No conflict of interest is extant in the present work.

\appendix

\section{Computation of the momentum exchange rate for hard-spheres molecules}
	\label{Subsection_Computation_Exchange}
	We compute here the momentum exchange rate $\mathbf{R}_{p, q}$, describing the change in momentum for the species $p$ due to the interactions with the other species $q$. We partially follow \cite{burgers1969} and the more recent paper \cite{Hunana_2022}. We assume a collision kernel $\B$ independent from the species. Since we are interested in an approximation of the exchange rate in the limit for $\eps \to 0$, it is reasonable to consider $f_p$ and $f_q$ as two Maxwellian distribution functions. In particular we can write the exchange rate in the following form
	\begin{equation}
		\mathbf{R}_{p,q} = \int_{\R^d \times \R^d \times \mathbb{S}^{d-1}} m_p\, m_q \mathcal{M}_p(v) \mathcal{M}_q(v_*) (v-v_*) \B(|v-v_*|,\cos\theta) d\sigma dv\,dv_* .
	\end{equation}
	Let us consider the product of two local Maxwellian
	\begin{equation}
		\label{product_maxwellians}
		\mathcal{M}_p(v) \, \mathcal{M}_q(v_*)= n_p \left(\frac{m_p}{2\pi T}\right)^{\frac{d}{2}} \exp \left(-\frac{m_p}{2\,T} |\bar{u}_p-v|^2 \right)\,n_q \left(\frac{m_q}{2\pi T}\right)^{\frac{d}{2}} \exp \left(-\frac{m_q}{2\,T} |\bar{u}_q-v_*|^2 \right).
	\end{equation}
	We introduce the following relative center of mass with respect to the velocities at the equilibrium. We can define 
	\begin{equation}
		\mathcal{W} = \frac{m_p\, \bar{u}_p + m_q\, \bar{u}_q }{m_p+m_q} - \mathcal{V}, 
	\end{equation}
	where $\mathcal{V}$ is the center of mass defined in \eqref{center_mass}. Now, we can rewrite 
	\begin{align*}
		&\bar{u}_p - v = \mathcal{W} + \frac{m_q}{m_p+m_q}[(\bar{u}_p-v)-(\bar{u}_q-v_*)],\\
		&\bar{u}_q - v_* = \mathcal{W} + \frac{m_p}{m_p+m_q}[(\bar{u}_q-v_*)-(\bar{u}_p-v)].
	\end{align*}
	Using this new variable, we can rewrite the product \eqref{product_maxwellians} as
	\begin{equation}
		\mathcal{M}_p(v) \, \mathcal{M}_q(v_*) = K_{pq} \exp \left(  -\frac{|\mathcal{W}|^2}{\mu_{pq}} - \frac{|(v-v_*) - (\bar{u}_p - \bar{u}_q)|^2}{\gamma_{pq}}\right),
	\end{equation}
	where 
	\begin{equation}
		K_{pq} = n_p\,n_q \frac{(m_p\,m_q)^{ \frac{d}{2}}}{(2\pi T)^d}, \quad \mu_{pq} = \frac{2T}{(m_p + m_q)}, \quad \gamma_{pq} = \frac{2(m_p+m_q) \, T}{m_p\,m_q}.
	\end{equation}
	Now we can perform the integral with respect the new variables $\mathcal{W}$ and $w:=v-v_*$, observing that the Jacobian of the transformation is unitary, which implies $dv\,dv_* = dw\,d\mathcal{W}$. Hence, integrating firstly over $d\mathcal{W}$:
	\begin{equation}
		\mathbf{R}_{p,q} = \pi^{\frac{d}{2}} \mu_{pq}^d K_{pq} m_p\,m_q \int_{\R^d \times \mathbb{S}^{d-1}}  \B(|w|,\cos \theta) \,w \, \exp \left(-\frac{|w - (\bar{u}_p - \bar{u}_q)|^2}{\gamma_{pq}} \right) d\sigma \, dw
	\end{equation}
	We now restrict our analysis to the hard-spheres case for $d=3$, where the interaction kernel is 
	\begin{equation}
		\B(|w|, \cos \theta) = |w|,
	\end{equation}
	from which we obtain
	\begin{equation}
		\mathbf{R}_{p,q} = 2 \pi^{\frac{5}{2}} \mu_{pq}^3 K_{pq} m_p\,m_q \int_{\R^3} 	|w| w \exp \left(- \frac{|w - (\bar{u}_p - \bar{u}_q)|^2}{\gamma_{pq}} \right) \,dw.
	\end{equation}
	Let us rewrite $w$ in spherical coordinates
	\begin{equation}
		w=|w|(\sin\varphi \cos \psi, \sin \varphi \sin \psi, \cos \varphi).
	\end{equation}
	Then,
	\begin{align*}
		I_{p,q}:=\int_{\R^3} |w| w &\exp \left(- \frac{|w - (\bar{u}_p - \bar{u}_q)|^2}{\gamma_{pq}} \right) \,dw\\ =&\int_0^\pi \int_0^{2\pi} \int_{0}^\infty |w|^3 \hat{w}\exp \left(- \frac{|w - (\bar{u}_p - \bar{u}_q)|^2}{\gamma_{pq}} \right) \sin \varphi \,d\varphi \,d\psi \,d|w| \\
		=& 2\pi \hat{e}_3 \int_0^{\pi} \int_0^\infty |w|^4 \exp \left(- \frac{|w - (\bar{u}_p - \bar{u}_q)|^2}{\gamma_{pq}} \right) \cos \varphi \sin \varphi \,d\varphi \,d|w|,
	\end{align*}
	where $\hat{w} = w/|w|$ and $\hat{e}_3$ could be chosen in the direction of $\bar{u}_p-\bar{u}_q$.\\
	Introducing the first change of variable
	\begin{equation}
		\alpha = (\bar{u}_p-\bar{u}_q)\cos \varphi, \qquad \beta = |w|, \qquad \nu = \bar{u}_p-\bar{u}_q,
	\end{equation}
	the integral becomes
	\begin{equation}
		I_{p,q}=\frac{2 \pi}{|\nu|^3} \nu \exp \left(-\frac{\nu^2}{\gamma_{pq}} \right) \int_{-\nu}^\nu \int_0^\infty \beta^4 \alpha \exp \left(-\frac{\beta^2}{\gamma_{pq}} -\frac{2\beta\alpha}{\gamma_{pq}} \right) d\alpha d\beta.
	\end{equation}
	Now we introduce another change of variable
	\begin{equation}
		\eta=\beta+\alpha,
	\end{equation}
	in order to rewrite $I_{p,q}$ as
	\begin{equation*}
		I_{p,q} = \frac{2 \pi}{|\nu|^3} \nu \exp \left(-\frac{\nu^2}{\gamma_{pq}} \right) \int_{-\nu}^\nu \int_0^\infty \alpha (\eta-\alpha)^4 \exp \left(-\frac{\eta^2}{\gamma_{pq}} +\frac{\alpha^2}{\gamma_{pq}} \right) d\alpha d\eta.
	\end{equation*}
	Let us consider the rescaled variables
	\begin{equation*}
		\eta = \frac{\eta}{\sqrt{\gamma_{pq}}}, \qquad \alpha = \frac{\alpha}{\sqrt{\gamma_{pq}}}. 
	\end{equation*}
	Now, avoiding the label in $\gamma_{pq}$, we have
	\begin{equation*}
		I_{p,q} = \frac{2 \pi}{|\nu|^3} \gamma^{11/2} \, \nu \exp \left(-\frac{\nu^2}{\gamma} \right) \int_{-\frac{\nu}{\gamma}}^{\frac{\nu}{\gamma}} \int_0^\infty \alpha (\eta-\alpha)^4 \exp \left(-\eta^2 +\alpha^2 \right) d\alpha \, d\eta.
	\end{equation*}
	We first perform the integration in $d\eta$,
	\begin{align*}
		\int_\alpha^\infty &(\eta-\alpha)^4 \exp \left(-\eta^2 \right) d\eta = \\
		=&\frac{1}{8} \sqrt{\pi} (3+ 12 \alpha^2 + 4 \alpha^4)\, \text{erfc}(\alpha) - \frac{1}{4} \alpha \exp \left(-\alpha^2 \right) (5+2\alpha^2),
	\end{align*}
	where erfc is the complementary error function.\\
	The final expression for $I_{p,q}$ is
	\begin{align*}
	\small
		I_{p,q} = &\frac{ \pi}{4|\nu|^3} \gamma^{11/2} \, \nu e^{\left(-\frac{\nu^2}{\gamma} \right)} \left(-4\left(\frac{\nu}{\gamma} \right)^3+ 2\left(\frac{\nu}{\gamma} \right)- \right. \\ & \left. \sqrt{\pi} \left(4\left(\frac{\nu}{\gamma} \right)^4+4\left(\frac{\nu}{\gamma} \right)^2-1 \right)e^{\left(\frac{\nu}{\gamma}\right)^2}\text{erf}\left(\frac{\nu}{\gamma} \right)\right)\\ \small
		= & \pi \nu \left[e^{\left(-\frac{\nu^2}{\gamma} \right)} \left(\gamma^{5/2} + \frac{\gamma^{9/2}}{2\nu^2}\right) - \sqrt{\pi} \left( \nu \gamma^{3/2} + \frac{\gamma^{7/2}}{\nu} - \frac{\gamma^{11/2}}{4\nu^3} \right) \text{erf}\left(\frac{\nu}{\gamma} \right)\right].
	\end{align*}
	We can conclude that
	\begin{equation*}
		\mathbf{R}_{p,q} =   \rho_p \, \rho_q (\bar{u}_p - \bar{u}_q) \Psi_{p,q},
	\end{equation*}
	where $\displaystyle \Psi_{p,q} = \frac{(m_p\,m_q)^{\frac{d}{2}}}{\sqrt{\pi}(m_p+m_q)} \frac{I_{p,q}}{(\bar{u}_p-\bar{u}_q)}$.
\bibliography{References}

\begin{thebibliography}{10}

\bibitem{abgrall2003}
{\scshape Abgrall, R., and Saurel, R.}
\newblock Discrete equations for physical and numerical compressible multiphase
  mixtures.
\newblock {\em J. Comput. Phys. 186}, 2 (2003), 361--396.

\bibitem{alonsocolicgamba2024}
{\scshape Alonso, R.~J., {\v{C}}oli{\'c}, M., and Gamba, I.~M.}
\newblock The {Cauchy} problem for {Boltzmann} bi-linear systems: the mixing of
  monatomic and polyatomic gases.
\newblock {\em J. Stat. Phys. 191}, 1 (2024), 50.

\bibitem{andries2002}
{\scshape Andries, P., Aoki, K., and Perthame, B.}
\newblock A consistent {BGK}-type model for gas mixtures.
\newblock {\em J. Stat. Phys. 106}, 5-6 (2002), 993--1018.

\bibitem{arnault2022chapman}
{\scshape Arnault, P., and Guisset, S.}
\newblock {Chapman--Enskog derivation of multicomponent Navier--Stokes
  equations}.
\newblock {\em Physics of Plasmas 29}, 9 (2022).

\bibitem{baer1986}
{\scshape Baer, M.~R., and Nunziato, J.~W.}
\newblock A two-phase mixture theory for the deflagration-to-detonation
  transition ({DDT}) in reactive granular materials.
\newblock {\em Int. J. Multiphase Flow 12\/} (1986), 861--889.

\bibitem{biancadogbe2015}
{\scshape Bianca, C., and Dogbe, C.}
\newblock On the {Boltzmann} gas mixture equation: linking the kinetic and
  fluid regimes.
\newblock {\em Commun. Nonlinear Sci. Numer. Simul. 29}, 1-3 (2015), 240--256.

\bibitem{bird1994}
{\scshape Bird, G.~A.}
\newblock {\em {Molecular Gas Dynamics And The Direct Simulation Of Gas
  Flows}}.
\newblock Oxford University Press, 05 1994.

\bibitem{bisi2014}
{\scshape Bisi, M., and Desvillettes, L.}
\newblock Formal passage from kinetic theory to incompressible
  {Navier}-{Stokes} equations for a mixture of gases.
\newblock {\em ESAIM, Math. Model. Numer. Anal. 48}, 4 (2014), 1171--1197.

\bibitem{bisi2024}
{\scshape Bisi, M., Groppi, M., Lucchin, E., and Martal{\`o}, G.}
\newblock A mixed {Boltzmann}-{BGK} model for inert gas mixtures.
\newblock {\em Kinet. Relat. Models 17}, 5 (2024), 674--696.

\bibitem{bisi2021hydro}
{\scshape Bisi, M., Groppi, M., and Martal{\`o}, G.}
\newblock Macroscopic equations for inert gas mixtures in different
  hydrodynamic regimes.
\newblock {\em J. Phys. A, Math. Theor. 54}, 8 (2021), 23.

\bibitem{bisigroppi2025}
{\scshape Bisi, M., Groppi, M., and Martal{\`o}, G.}
\newblock A space-dependent {Boltzmann}-{BGK} model for gas mixtures and its
  hydrodynamic limits.
\newblock Preprint, {arXiv}:2505.19735 [math-ph] (2025), 2025.

\bibitem{bisi2011multi}
{\scshape Bisi, M., Martal{\`o}, G., and Spiga, G.}
\newblock {Multi-temperature Euler hydrodynamics for a reacting gas from a
  kinetic approach to rarefied mixtures with resonant collisions}.
\newblock {\em Europhysics Letters 95}, 5 (2011), 55002.

\bibitem{bisi2012multi}
{\scshape Bisi, M., Martal{\`o}, G., and Spiga, G.}
\newblock Multi-temperature hydrodynamic limit from kinetic theory in a mixture
  of rarefied gases.
\newblock {\em Acta Appl. Math. 122}, 1 (2012), 37--51.

\bibitem{bobylev2018}
{\scshape Bobylev, A.~V., Bisi, M., Groppi, M., Spiga, G., and Potapenko,
  I.~F.}
\newblock A general consistent {BGK} model for gas mixtures.
\newblock {\em Kinet. Relat. Models 11}, 6 (2018), 1377--1393.

\bibitem{bondesan2020}
{\scshape Bondesan, A., Boudin, L., Briant, M., and Grec, B.}
\newblock Stability of the spectral gap for the {Boltzmann} multi-species
  operator linearized around non-equilibrium {Maxwell} distributions.
\newblock {\em Commun. Pure Appl. Anal. 19}, 5 (2020), 2549--2573.

\bibitem{bondesan2024}
{\scshape Bondesan, A., and Tang, B.~Q.}
\newblock Explicit spectral gap estimates for the linearized {Boltzmann}
  operator modeling reactive gaseous mixtures.
\newblock Preprint, {arXiv}:2410.15055 [math.{AP}] (2024), 2024.

\bibitem{boscarino2021}
{\scshape Boscarino, S., Cho, S.~Y., Groppi, M., and Russo, G.}
\newblock {BGK} models for inert mixtures: comparison and applications.
\newblock {\em Kinet. Relat. Models 14}, 5 (2021), 895--928.

\bibitem{boudin2017}
{\scshape Boudin, L., Grec, B., and Pavan, V.}
\newblock The {Maxwell}-{Stefan} diffusion limit for a kinetic model of
  mixtures with general cross sections.
\newblock {\em Nonlinear Anal., Theory Methods Appl., Ser. A, Theory Methods
  159\/} (2017), 40--61.

\bibitem{boudin2015}
{\scshape Boudin, L., Grec, B., and Salvarani, F.}
\newblock The {Maxwell}-{Stefan} diffusion limit for a kinetic model of
  mixtures.
\newblock {\em Acta Appl. Math. 136}, 1 (2015), 79--90.

\bibitem{bresch2023}
{\scshape Bresch, D., Burtea, C., and Lagouti{\`e}re, F.}
\newblock Mathematical justification of a compressible bi-fluid system with
  different pressure laws: a semi-discrete approach and numerical
  illustrations.
\newblock {\em J. Comput. Phys. 490\/} (2023), 39.

\bibitem{briant2016stability}
{\scshape Briant, M.}
\newblock {Stability of global equilibrium for the multi-species Boltzmann
  equation in {$L^\infty$} settings}.
\newblock {\em Discrete and Continuous Dynamical Systems 36}, 12 (2016),
  6669--6688.

\bibitem{briant2016boltzmann}
{\scshape Briant, M., and Daus, E.~S.}
\newblock {The Boltzmann equation for a multi-species mixture close to global
  equilibrium}.
\newblock {\em Archive for Rational Mechanics and Analysis 222\/} (2016),
  1367--1443.

\bibitem{briantgrec2023}
{\scshape Briant, M., and Grec, B.}
\newblock Rigorous derivation of the {Fick} cross-diffusion system from the
  multi-species {Boltzmann} equation in the diffusive scaling.
\newblock {\em Asymptotic Anal. 135}, 1-2 (2023), 55--80.

\bibitem{burgers1969}
{\scshape Burgers, J.~M.}
\newblock {\em Flow equations for composite gases}, vol.~11 of {\em Appl. Math.
  Sci.}
\newblock New York etc.: Springer-Verlag, 1969.

\bibitem{castulik2002}
{\scshape {\v{C}}astul{\'{\i}}k, P.}
\newblock The compressible boundary layer and the {Boltzmann} equation.
\newblock {\em Appl. Math. Lett. 15}, 1 (2002), 17--23.

\bibitem{cercignani1988}
{\scshape Cercignani, C.}
\newblock {\em The {Boltzmann} equation and its applications}, vol.~67 of {\em
  Appl. Math. Sci.}
\newblock New York etc.: Springer-Verlag, 1988.

\bibitem{cercignanipulvirenti}
{\scshape Cercignani, C., Illner, R., and Pulvirenti, M.}
\newblock {\em The mathematical theory of dilute gases}, vol.~106 of {\em Appl.
  Math. Sci.}
\newblock New York, NY: Springer-Verlag, 1994.

\bibitem{chapmancowling1939}
{\scshape Chapman, S., and Cowling, T.~G.}
\newblock The mathematical theory of non-uniform gases.
\newblock Cambridge: {Cambridge} {University} {Press}. xxiii, 404 p. (1939).,
  1939.

\bibitem{CC_1939}
{\scshape Chapman, S., and Cowling, T.~G.}
\newblock The mathematical theory of non-uniform gases. {An} account of the
  kinetic theory of viscosity, thermal conduction, and diffusion in gases.
\newblock Cambridge: {Cambridge} {University} {Press}; {New} {York}: {The}
  {Macmillan} {Company}. xxiv, 404 p. (1939)., 1939.

\bibitem{paviccolic2024}
{\scshape {\v{C}}oli{\'c}, M.}
\newblock {Explicit model for a mixture of Eulerian fluids based on kinetic
  theory}.
\newblock {\em Physics of Fluids 36}, 6 (06 2024), 067134.

\bibitem{delacanal2020}
{\scshape de~la Canal, E., Gamba, I.~M., and Pavi{\'c}-{\v{C}}oli{\'c}, M.}
\newblock Propagation of ${L}^p_{\beta}$-norm, $ 1<p{{\le}} {{\infty}}$, for
  the system of {Boltzmann} equations for monatomic gas mixtures.
\newblock Preprint, {arXiv}:2001.09204 [math.{AP}] (2020), 2020.

\bibitem{desvillettes2005}
{\scshape Desvillettes, L., Monaco, R., and Salvarani, F.}
\newblock A kinetic model allowing to obtain the energy law of polytropic gases
  in the presence of chemical reactions.
\newblock {\em European Journal of Mechanics - B/Fluids 24}, 2 (2005),
  219--236.

\bibitem{drew1998}
{\scshape Drew, D.~A., and Passman, S.~L.}
\newblock {\em Theory of multicomponent fluids}, vol.~135 of {\em Appl. Math.
  Sci.}
\newblock New York, NY: Springer, 1998.

\bibitem{dumbser2011}
{\scshape Dumbser, M.}
\newblock A simple two-phase method for the simulation of complex free surface
  flows.
\newblock {\em Computer Methods in Applied Mechanics and Engineering 200}, 9
  (2011), 1204--1219.

\bibitem{flatten2011}
{\scshape Fl{\aa}tten, T., and Lund, H.}
\newblock Relaxation two-phase flow models and the subcharacteristic condition.
\newblock {\em Math. Models Methods Appl. Sci. 21}, 12 (2011), 2379--2407.

\bibitem{galkin1994}
{\scshape Galkin, V.~S., and Makashev, N.~K.}
\newblock Kinetic derivation of the gas-dynamic equations for multicomponent
  mixtures of light and heavy particles.
\newblock {\em Fluid Dyn. 29}, 1 (1994), 140--155.

\bibitem{gambapavic2020}
{\scshape Gamba, I.~M., and Pavi{\'c}-{\v{C}}oli{\'c}, M.}
\newblock On existence and uniqueness to homogeneous {Boltzmann} flows of
  monatomic gas mixtures.
\newblock {\em Arch. Ration. Mech. Anal. 235}, 1 (2020), 723--781.

\bibitem{godunov1995}
{\scshape Godunov, S.~K., and Romensky, E.~I.}
\newblock Thermodynamics, conservation laws, and symmetric forms of
  differential equations in mechanics of continuous media.
\newblock In {\em Computational fluid dynamics review 1995}. Chichester: John
  Wiley \& Sons, 1995, pp.~19--30.

\bibitem{golse2005}
{\scshape Golse, F.}
\newblock The {Boltzmann} equation and its hydrodynamic limits.
\newblock In {\em Handbook of differential equations: Evolutionary equations.
  Vol. II}. Amsterdam: Elsevier/North-Holland, 2005, pp.~159--301.

\bibitem{golse2011}
{\scshape Golse, F.}
\newblock From the {Boltzmann} equation to hydrodynamic equations in thin
  layers.
\newblock {\em Boll. Unione Mat. Ital. (9) 4}, 2 (2011), 163--186.

\bibitem{haack2017}
{\scshape Haack, J.~R., Hauck, C.~D., and Murillo, M.~S.}
\newblock A conservative, entropic multispecies {BGK} model.
\newblock {\em J. Stat. Phys. 168}, 4 (2017), 826--856.

\bibitem{Hunana_2022}
{\scshape Hunana, P., Passot, T., Khomenko, E., Martínez-Gómez, D., Collados,
  M., Tenerani, A., Zank, G.~P., Maneva, Y., Goldstein, M.~L., and Webb, G.~M.}
\newblock {Generalized Fluid Models of the Braginskii Type}.
\newblock {\em The Astrophysical Journal Supplement Series 260}, 2 (2022), 26.

\bibitem{klingeberg2019}
{\scshape Klingenberg, C., Pirner, M., and Puppo, G.}
\newblock A consistent kinetic model for a two-component mixture of polyatomic
  molecules.
\newblock {\em Commun. Math. Sci. 17}, 1 (2019), 149--173.

\bibitem{puppo2023}
{\scshape Luk{\'a}{\v{c}}ov{\'a}-Medvid'ov{\'a}, M., Puppo, G., and Thomann,
  A.}
\newblock An all {Mach} number finite volume method for isentropic two-phase
  flow.
\newblock {\em J. Numer. Math. 31}, 3 (2023), 175--204.

\bibitem{lund2012}
{\scshape Lund, H.}
\newblock A hierarchy of relaxation models for two-phase flow.
\newblock {\em SIAM J. Appl. Math. 72}, 6 (2012), 1713--1741.

\bibitem{reytenna2024}
{\scshape Rey, T., and Tenna, T.}
\newblock The {Boltzmann} equation for a multi-species inelastic mixture.
\newblock Preprint, {arXiv}:2410.17027 [math.{AP}] (2024), 2024.

\bibitem{toro2010}
{\scshape Romenski, E., Drikakis, D., and Toro, E.}
\newblock Conservative models and numerical methods for compressible two-phase
  flow.
\newblock {\em J. Sci. Comput. 42}, 1 (2010), 68--95.

\bibitem{romenski2004compressible}
{\scshape Romenski, E., and Toro, E.~F.}
\newblock Compressible two-phase flows: two-pressure models and numerical
  methods.
\newblock {\em Comput. Fluid Dyn. J 13\/} (2004), 403--416.

\bibitem{saintraymond2009}
{\scshape Saint-Raymond, L.}
\newblock {\em Hydrodynamic limits of the {Boltzmann} equation}, vol.~1971 of
  {\em Lect. Notes Math.}
\newblock Berlin: Springer, 2009.

\bibitem{saurel1999}
{\scshape Saurel, R., and Abgrall, R.}
\newblock A multiphase {Godunov} method for compressible multifluid and
  multiphase flows.
\newblock {\em J. Comput. Phys. 150}, 2 (1999), 425--467.

\bibitem{saurel2001}
{\scshape Saurel, R., and Lemetayer, O.}
\newblock A multiphase model for compressible flows with interfaces, shocks,
  detonation waves and cavitation.
\newblock {\em J. Fluid Mech. 431\/} (2001), 239--271.

\bibitem{theindumbser2022}
{\scshape Thein, F., Romenski, E., and Dumbser, M.}
\newblock Exact and numerical solutions of the {Riemann} problem for a
  conservative model of compressible two-phase flows.
\newblock {\em J. Sci. Comput. 93}, 3 (2022), 60.

\bibitem{thomann2023}
{\scshape Thomann, A., and Dumbser, M.}
\newblock Thermodynamically compatible discretization of a compressible
  two-fluid model with two entropy inequalities.
\newblock {\em J. Sci. Comput. 97}, 1 (2023), 33.

\end{thebibliography}
\bibliographystyle{acm}

\end{document}